\newtheorem{theorem}{Theorem}[section]
\newtheorem{corollary}{Corollary}
\newtheorem{lemma}[theorem]{Lemma}
\newtheorem{proposition}{Proposition}
\theoremstyle{definition}
\newtheorem{definition}[theorem]{Definition}
\newtheorem{remark}{Remark}
\title[Fourier-Stieltjes in locally compact groups] 
      {Fourier-Stieltjes transform defined by induced representation on locally compact groups}
\author[Y. I. Akakpo,   K. Enakoutsa, K. Assiamoua  and M. N. Hounkonnou]{}
\subjclass{Primary: 58F15, 58F17; Secondary: 53C35.}
 \keywords{Dimension theory, Poincar\'e recurrences, multifractal analysis.}
 \email{akjeremiji@yahoo.fr}
 \email{koffi.enakoutsa@csun.edu}
 \email{kofassiam@yahoo.fr}
  \email{norbert.hounkonnou@cipma.uac.bj, with copy to hounkonnou@yahoo.fr}
\begin{document}
\maketitle

\centerline{\scshape Yao Ihébami  Akakpo}
\medskip
{\footnotesize
 \centerline{1300 rue Ouimet App7, H4L 2L3 Montréal, Qc, Ca }
   \centerline{ICMPA-UNESCO Chair (UAC), 072 BP 50, Cotonou, Benin}
   \centerline{ Tel +15142294451}
} 

\medskip
\centerline{\scshape V.S. Koffi Assiamoua}
\medskip
{\footnotesize
 \centerline{ University of Lom\'e, Facult\'e Des Sience,,}
   \centerline{ Department of Mathematics (BP 1515 Lom\'e Togo),}
   \centerline{Tel +228 92 77 28 26}
}

\centerline{\scshape Koffi Enakoutsa}
\medskip
{\footnotesize
 \centerline{ California State University, Northridge, Department of Mathematics,}
   \centerline{18111 Nordhoff Street, Northridge, CA, USA}
}
\centerline{\scshape Mahouton Norbert Hounkonnou}
\medskip
{\footnotesize
 \centerline{ University of Abomey-Calavi,}
   \centerline{ International Chair in Mathematical Physics and Applications (ICMPA-UNESCO Chair),}
    \centerline{072 BP 50, Cotonou, Rep. of Benin}
   \centerline{Tel +229 95 06 26 89}
}

\bigskip


\begin{abstract}
In this work we extend the Fourier-Stieltjes transform of a vector measure and a continuous function defined on compact groups to locally compact groups. 
To do so, we consider a representation $L$ of a normal compact subgroup $K$ of a
locally compact group $G$, and we use a representation of $G$ induced by that of $L$. 
Then, we define the Fourier-Stieltjes transform of a vector measure and that of a
continuous function with compact support defined on $G$ from the representation of $G$. 
Then, we extend the Shur orthogonality relation established for compact groups to locally compact groups by
using the representations of $G$ induced by the unitary representations of one of its normal  compact subgroups. 
This extension enables us to develop a Fourier-Stieltjes transform in series form that is linear,
continuous, and invertible.
\end{abstract}

\section{Introduction}

\label{intro}
The vector measures generalizing scalar measures attracted a great interest in recent decades due to their numerous applications in functional analysis, control systems, signal analysis, quantum information, quantum theories, and many other domains of applications. 
For more details on vector measure theory, see for instance, 
%
%
\cite{Di,Bar} and \cite{assiamoua1} for some applications on compact groups. 
Also, Clarkson \cite{Ja} used theoretical ideas on vector measure to prove that many Banach spaces do not admit equivalent uniformly convex norms.
In the same vein,  Gel'fand \cite{Im} proved that $L _{1} \lbrack 0, 1\rbrack$ is not isomorphic to a dual of a Banach space. 
Lyapunov \cite{L} showed that the range of a (non-atomic) vector measure is closed and convex. 
Lyapunov \cite{L}' s work ccupies a prominent place in modern mathematics since it lies at the intersection of the theory of convex sets and measure theory. 
The Lyapunov convexity theorem became the starting point of numerous studies in the framework of mathematical analysis as well as in the realm of geometric research into the convex sets that are ranges of non-atomic vector measures  \cite{S}. 
In addition, Bartle \cite{Rg}, Dinculeanu, Kluv\'anek \cite{Di}, Dunford and Schwartz \cite{H-R},  and Lindenstrauss and Pelczy\'nki \cite{Jl} gave many seminal results on vector measure.
For instance, Diestel and Uhl Jr \cite{D-U} provided a comprehensive survey on vector measures. 
Applications of vector measures were discussed in 1980 in the work by Kluv\'anek \cite{Il}. 
Fern\'andez and Faranjo \cite{Af} studied the Rybakov's theorem for vector measures in Fr\'echet spaces.
Curbera and Ricker \cite{Gp} wrote a survey on vector measures, integration, and applications. 
More information on vector measures can also be found in \cite{D-U,Gp}. 
%
%
\\

To focus on our interest,
let $G$ be a locally compact group,   $m$  a vector measure on $G$ into a Banach algebra $\mathcal{A}$, $\lambda$ a left or right Haar measure on $G,$ and $f\in L_{1}(G,\lambda)$. 
If the group $G$ is abelian, the Fourier-Stieltjes transform of $m$ is given by the relation
\begin{eqnarray} 
 \hat{m}(\chi)  & = & \int_{G} \overline{\left<\chi,t\right>}dm(t),
 \end{eqnarray}
 while the Fourier transform of $f$ is given by 
 \begin{eqnarray}
\hat{f}(\chi) = \int_{G} \overline{\left<\chi,t\right>}f(t)d\lambda(t),
 \end{eqnarray}
where $\chi$ denotes a character of $G$. 
If $G$ is compact and $\mathcal{A} = \mathbb{C}$, then the Fourier-Stieltjes transform of $m$ is a family of endomorphisms $(\hat{m}(\sigma))_{\sigma\in \Sigma}$ given by
\begin{eqnarray} 
\left<\hat{m}(\sigma)  \xi, \eta \right> = \int_{G} \left< \overline{U}_{t}^{\sigma}\xi, \eta \right> d m(t),
\end{eqnarray}
 and the associated Fourier transform is provided by the relation
\begin{eqnarray}
\left<\hat{f}(\sigma)  \xi, \eta \right> = \int_{G} \left< \overline{U}_{t}^{\sigma}\xi, \eta \right>f(t) d \lambda(t),
\end{eqnarray}
where $U^{\sigma}$ denotes a unitary representation of the group $G$. 
If $G$ is compact and $\mathcal{A}$ is any Banach algebra, Assiamoua \cite{assiamoua1} defined a Fourier-Stieltjes transform of a bounded vector measure $m$ on $G$  as a family $(\hat{m}(\sigma))_{\sigma\in \Sigma}$ of sesquilinear mappings of $H_{\sigma} \times H_{\sigma}$ with values in $\mathcal{A}$ given by the relation,
\begin{eqnarray}	
\hat{m}(\sigma) ( \xi, \eta ) = \int_{G} \left< \overline{U}_{t}^{\sigma}\xi, \eta \right> d m(t),
\end{eqnarray}
and the  Fourier transform of a function $f \in L_{1} (G)$ as a family of continuous endomorphisms $(\hat{f}(\sigma))_{\sigma\in \Sigma}$ of sesquilinear applications of $H_{\sigma} \times H_{\sigma}$ with value in $\mathcal{A}$, given by the relation
\begin{eqnarray}
\hat{f}(\sigma) ( \xi, \eta ) = \int_{G} \left< \overline{U}_{t}^{\sigma}\xi, \eta \right>f(t) d \lambda(t).
\end{eqnarray}
In the continuation of previous investigations by \cite{assiamoua1},  the present work addresses a construction of the  Fourier-Stieltjes transform on locally compact groups from  a group representation induced by a representation of a compact subgroup.
For this purpose, we consider a locally compact group $G$, $K$ a compact normal subgroup of $G$, $\mu$ a $G$-invariant measure on the left coset space $G/K$, and $L^{\sigma}$ a unitary representation of $K$ into a separable Hilbert space $H_{\sigma}$. Then, we define the Fourier-Stieltjes transform of a vector measure on the locally compact group $G$ using the representation $U^{L^{\sigma}}$ of $G$ induced by $L^{\sigma}$.
In this context usually one uses Gel'fand transform to define Fourier transform.
We consider a locally compact group $G$ with Haar measure $dx$. Let $A$ be a unitary commutative Banach algebra and $X(A)$ its spectrum. 
The Gel'fand transform of $x$, $x\in A$, is the function $\mathcal{G}_{x}: X(A) \longrightarrow \mathbb{C}$ such that $\mathcal{G}_{x}(\chi) = \chi(x)$. 
The mapping $x \longmapsto \mathcal{G}_{x} : A \longrightarrow \mathbb{C}^{X(A)}$ is called the transformation of Gel'fand associated with $A$. 
The spherical Fourier transform is the Gel'fand transform associated with $L_{1}(G)^{\natural}$, the space of integrable, bi-invariant functions by a compact subgroup $K$ of $G$ on $G$. 
In this case, for $f\in L_{1}(G)^{\natural}$, $\mathcal{G}_{x}$ is denoted $\mathcal{F}f$ or $\hat{f}$ and is defined by 
\begin{eqnarray}
\hat{f}(\chi) & = & \int_{G}f(x)\chi (x^{-1})dx.
\end{eqnarray}
Our method has several advantages over the Gel'fand transform.
First, our Fourier transform is injective while the Gel'fand transform is not necessarly injective. 
Second the Gel'fand transform is limited to spherical functions only. In our case the Fourier transform exists for the whole $L_{1}(G,A)$, $1 \leq p < \infty$.
Finally, our method is constructive while in Gel'fand transformation the Fourier transformation is obtained by induction.
\\

The paper is organized as follows. In Section 1, we recall the definition of a vector measure and unit representation of a group which are useful in the sequel. 
In Section 2, we provide the proof of the Shur's orthogonality property in connection with induced representation and we define  the Fourier-Stieltjes transform of a vector measure and the Fourier transform of a function in $L_{1}(G, \mathcal{A})$. 

We also report in Section 2 several properties we discovered for our newly developed Fourier-Stieltjes transform.   
\section{Preliminaries}
\label{sec:1}
In this section, for the clarity of the development, we briefly recall useful  known main definitions and results, and set our notations.
We consider  a locally compact space $G,$ the  Banach spaces  $\mathcal{A}$ and $\mathcal{F}$  over the field $\mathbb{K}, \,  ( \mathbb{K} = \mathbb{R}$  or $\mathbb{C}),$
and denote by $\mathcal{K}(G, \mathcal{A})$ the vector space of all continuous functions $f : G \longrightarrow \mathcal{A}$  with a compact support, and by $\mathfrak{C}(G,\mathcal{A})$ the space of continuous functions $f : G \longrightarrow \mathcal{A}$.

For simplification, we write $\mathcal{K}(G)$ instead of $\mathcal{K}(G, \mathbb{R})$ or $\mathcal{K}(G, \mathbb{C}).$
For each subset $K$ of $G$, denote by 
$\mathcal{K}_{K}(G, \mathcal{A})$ the space of functions with support contained in $K$. $\mathcal{K}_{K}(G, \mathcal{A})$ is a subspace of $\mathcal{K}(G, \mathcal{A})$.
\begin{definition}  
 For every function $f \in \mathcal{K}(G, \mathcal{A})$, we define 
 \[\Vert f \Vert := \sup _{t\in G} \Vert f(t) \Vert_{\mathcal{A}}. \] 
The mapping $ f \mapsto \Vert f \Vert$ is a norm on each space  $\mathcal{K}_{K}(G, \mathcal{A}),$ and defines the topology of uniform convergence on $G$ over $\mathcal{K}(G)$. 
\end{definition}
\begin{definition}
 On $\mathcal{K}(G, \mathcal{A}),$  the topology of the  compact
convergence is the locally convex topology defined by the family of seminorms  \[\left\Vert f \right\Vert_{K} = \sup_{t \in K} \Vert f(t) \Vert_{\mathcal{A}}, \]
where $K$ takes the elements in the set of compact subsets of $G$.
\end{definition}
\begin{proposition}
The space $\mathcal{K}(G, \mathcal{A})$ is dense in the space $\mathfrak{C}(G,\mathcal{A})$ for the
topology of the compact convergence \cite{Di}.
\end{proposition}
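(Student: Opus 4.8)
The plan is to exploit the fact that a basic neighbourhood of a function in the topology of compact convergence is controlled by a single compact set, and then to cut off an arbitrary continuous function by a bump function supplied by the locally compact version of Urysohn's lemma. Concretely, I would show the stronger statement that every $f \in \mathfrak{C}(G,\mathcal{A})$ can be matched \emph{exactly} on any prescribed compact set by an element of $\mathcal{K}(G,\mathcal{A})$.

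First I would reduce the density claim to a single-compact-set statement. A basic open neighbourhood of $f \in \mathfrak{C}(G,\mathcal{A})$ has the form $\{\,g : \Vert f-g\Vert_{K_i} < \varepsilon,\ i=1,\dots,n\,\}$ for finitely many compact sets $K_1,\dots,K_n$ and some $\varepsilon > 0$. Setting $K := \bigcup_{i=1}^{n} K_i$, which is again compact, and noting that $\Vert \cdot \Vert_{K_i} \le \Vert \cdot \Vert_{K}$ for each $i$, it suffices to produce, for each compact $K \subseteq G$, some $g \in \mathcal{K}(G,\mathcal{A})$ with $\Vert f - g\Vert_{K}$ arbitrarily small; in fact I will arrange $\Vert f-g\Vert_{K} = 0$.

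Next, using that $G$ is locally compact (and Hausdorff), I would choose a relatively compact open set $V$ with $K \subseteq V$ and $\overline{V}$ compact. Urysohn's lemma in the locally compact setting then yields a scalar function $\varphi \in \mathcal{K}(G)$ with $0 \le \varphi \le 1$, with $\varphi \equiv 1$ on $K$, and with $\operatorname{supp}\varphi \subseteq \overline{V}$. I would then set $g := \varphi f$, that is, $g(t) = \varphi(t)\, f(t)$. Continuity of $g$ follows from the joint continuity of scalar multiplication $\mathbb{K}\times\mathcal{A}\to\mathcal{A}$ precomposed with the continuous maps $\varphi$ and $f$, while $\{t : g(t)\neq 0\} \subseteq \operatorname{supp}\varphi$, which is compact; hence $g \in \mathcal{K}(G,\mathcal{A})$. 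Since $\varphi \equiv 1$ on $K$, we get $g(t) = f(t)$ for every $t \in K$, so $\Vert f-g\Vert_{K} = 0 < \varepsilon$. As $f$ and its neighbourhood were arbitrary, every neighbourhood of $f$ meets $\mathcal{K}(G,\mathcal{A})$, which is exactly the asserted density.

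The only genuine input is the locally compact Urysohn lemma guaranteeing the compactly supported cutoff $\varphi$; accordingly, the main point to be careful about is the standing hypothesis that $G$ is locally compact Hausdorff, together with the verification that multiplying the $\mathcal{A}$-valued $f$ by the scalar $\varphi$ preserves both continuity and the compactness of the support. Everything else is a routine unwinding of the definition of the compact-convergence topology.
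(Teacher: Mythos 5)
Your proof is correct, but there is nothing in the paper to compare it against: the paper states this proposition as a known result with a citation to Dinculeanu \cite{Di} and gives no proof at all. Your argument is the standard self-contained one, and every step checks out. The reduction of a basic compact-convergence neighbourhood to a single compact set (take the union of the finitely many $K_i$, and a common $\varepsilon$) is valid; the locally compact Urysohn lemma supplies the cutoff $\varphi \in \mathcal{K}(G)$ with $\varphi \equiv 1$ on $K$ and compact support; and $g = \varphi f$ is indeed in $\mathcal{K}(G,\mathcal{A})$, since $\{t : g(t) \neq 0\} \subseteq \operatorname{supp}\varphi$ forces $\operatorname{supp} g$ to be a closed subset of a compact set, while continuity follows from joint continuity of scalar multiplication $\mathbb{K}\times\mathcal{A}\to\mathcal{A}$. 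The nice feature of your construction is that the approximation is exact on $K$ ($\Vert f - g\Vert_{K} = 0$), which is stronger than what density requires. The one point you correctly flag as a genuine hypothesis is Hausdorffness: the paper only says ``locally compact space,'' but the Urysohn lemma needs locally compact Hausdorff, which is the standing convention in this context (and in \cite{Di}), so this is a matter of making an implicit assumption explicit rather than a gap.
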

\begin{definition} 
 A vector measure on $G$ with respect to two
spaces $\mathcal{A}$ and $\mathcal{F}$, or an $(\mathcal{A},\mathcal{F})$-measure on $G$, is any linear mapping 
$m : \mathcal{K}(G, \mathcal{A})\longrightarrow \mathcal{F} $ having the property that,  for each compact set $K \subset G,$ the restriction $m$ to the subspace $\mathcal{K}_{K}(G, \mathcal{A})$ is continuous for the topology of uniform convergence, i.e. for each compact set,  there exists a number $a_{K} > 0$ such that 
\[ \left\Vert m (f) \right\Vert \leq a_{K} \sup \left\lbrace \Vert f(t) \Vert_{\mathcal{A}} , \quad t \in K \right\rbrace. \]
\\

The value $m(f)$ of $m$ for a function $f \in \mathcal{K}(G, \mathcal{A})$ is called the
integral of $f$ with respect to $m$ also denoted by $\int_{G}f dm$ or $\int_{G}f(t) dm(t)$.
A vector measure is said to be dominated if there exists a positive measure $\mu$ such that 
\[\left\Vert \int_{G}f(t)dm(t) \right\Vert \leq \int_{G} \vert f(t) \vert d\mu(t), \quad    \quad f \in \mathcal{K}(G). \]
If $m$ is dominated, then there exists a smallest positive measure $\vert m \vert,$ called the modulus or the variation of $m$, that dominates it.  A positive measure is said to be bounded if it is continuous in the uniform norm topology of $\mathcal{K}(G).$
A vector measure is said to be bounded if it is dominated by a bounded positive measure. If $m$ is bounded,  then $\vert m \vert$ is also bounded. 

Denoting by $M_{1}(G,\mathcal{A})$ the Banach algebra of bounded vector measures on $G,$
 the mapping 
\begin{eqnarray} 
m \mapsto \Vert m \Vert = \int_{G} \chi _{G}d \vert m \vert
\end{eqnarray} 
  is a norm on $M_{1}(G,\mathcal{A})$, where $\chi _{G}$ represents the characteristic function of $G$.
\end{definition}
In the sequel, $K$ will denote a compact subgroup of $G$, $\nu$ and $\lambda$  left Haar measures  on $K$ and $G,$ respectively.
\begin{definition}\mbox{ }
Let $\mu$ be  a Radon measure on $G/K,$ the homogenous space of left $K$-cosets and $g$  an element of $G$. Define $\mu_{g}$ by $\mu_{g}(E) = \mu(gE)$ for Borel subsets $E$ of $G/K$. The measure $\mu$ is called $G-$invariant measure if $\mu_{g} = \mu$, for $g \in G$. See \cite{folland, gaal} for more details.
\end{definition}
%
%
Throughout the paper, $\mu$ will denote the $G-$ invariant measure on $G/K$.
\begin{theorem}\mbox{ }
 For any $ f \in \mathcal {K} (G), $ we have  \cite{folland,anahar}:
\begin{eqnarray} \label{inv}
\int_{G}f(g)d\lambda(g) &=& \int_{X}d\mu(\dot{g})\int_{K}f(gk)d\nu(k).
\end{eqnarray} 
The previous formula ( \ref{inv}) extends also to every $f\in L_{1}(G,\lambda, A )$ \cite{Bar2}.\\
\end{theorem}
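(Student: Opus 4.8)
The plan is to recognize formula (\ref{inv}) as the Weil integration formula and to prove it by exhibiting its right-hand side as a left-invariant positive linear functional on $\mathcal{K}(G)$, which by the uniqueness of Haar measure must coincide with integration against $\lambda$ up to a multiplicative constant absorbed into the normalization of the measures. To this end I would introduce the averaging operator $P : \mathcal{K}(G) \longrightarrow \mathcal{K}(G/K)$ given by
\[ (Pf)(\dot{g}) = \int_{K} f(gk)\, d\nu(k), \]
and consider the functional $I(f) = \int_{G/K} (Pf)(\dot{g})\, d\mu(\dot{g})$, which is precisely the right-hand side of (\ref{inv}).

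First I would verify that $P$ is well defined and takes values in $\mathcal{K}(G/K)$. Independence of the chosen representative $g$ of the coset $\dot{g}$ follows from the left-invariance of the Haar measure $\nu$: replacing $g$ by $gk_{0}$ with $k_{0} \in K$ and substituting $k \mapsto k_{0}^{-1}k$ leaves the inner integral unchanged. Continuity of $Pf$ follows from the uniform continuity of $f$ and the compactness of $K$, while the support of $Pf$ is compact because $(Pf)(\dot{g}) \neq 0$ forces $g \in (\operatorname{supp} f)\,K$, whose image in $G/K$ is compact since $K$ is. The compactness of $K$ is exactly what guarantees finiteness of the inner $K$-integral and preservation of compact support, a feature that would fail for a general closed subgroup.

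Next I would prove the left-invariance of $I$. Writing $(L_{g_{0}}f)(g) = f(g_{0}^{-1}g)$ for $g_{0} \in G$, a direct substitution shows $P(L_{g_{0}}f)(\dot{g}) = (Pf)(g_{0}^{-1}\dot{g})$, so that
\[ I(L_{g_{0}}f) = \int_{G/K} (Pf)(g_{0}^{-1}\dot{g})\, d\mu(\dot{g}) = \int_{G/K} (Pf)(\dot{g})\, d\mu(\dot{g}) = I(f), \]
the middle equality being the defining $G$-invariance of $\mu$. Since $P$ maps nonnegative functions to nonnegative functions, $I$ is a nonzero positive left-invariant linear functional on $\mathcal{K}(G)$, and the uniqueness of left Haar measure yields $I(f) = c \int_{G} f\, d\lambda$ for some $c > 0$; normalizing $\mu$ (or $\nu$) fixes $c = 1$ and establishes (\ref{inv}) on $\mathcal{K}(G)$.

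Finally, the extension to $f \in L_{1}(G,\lambda,\mathcal{A})$ would follow by density and continuity. Applying the scalar identity to $t \mapsto \Vert f(t) \Vert_{\mathcal{A}}$ shows that both sides of (\ref{inv}) are dominated by $\Vert f \Vert_{L_{1}}$, so each defines a continuous linear map on $L_{1}(G,\lambda,\mathcal{A})$; since $\mathcal{K}(G,\mathcal{A})$ is dense in $L_{1}(G,\lambda,\mathcal{A})$ and the identity already holds there by the vector-valued version of the argument above, it passes to the closure. I expect the main obstacle not to be any single calculation but the careful bookkeeping of the left-invariance step --- combining the translation law of $P$ with the $G$-invariance of $\mu$ in the right order --- together with the auxiliary fact that $P$ maps $\mathcal{K}(G)$ onto $\mathcal{K}(G/K)$, which is needed to guarantee $I \not\equiv 0$ and rests on a partition-of-unity lemma.
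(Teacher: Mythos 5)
The paper does not actually prove this theorem: it is imported from the literature (Folland and Kinvi--Tour\'e for the $\mathcal{K}(G)$ case, Bourbaki for the $L_{1}$ extension), so there is no internal proof to compare against. Your argument is exactly the standard proof found in those references (e.g.\ Folland, Theorem 2.49): the averaging operator $P$, well-definedness via invariance of $\nu$, the positive $G$-invariant functional $I$, uniqueness of Haar measure to get $I = c\int_{G}\cdot\, d\lambda$ with $c$ absorbed into the normalization of $\mu$, and density of $\mathcal{K}(G,\mathcal{A})$ in $L_{1}(G,\lambda,\mathcal{A})$ for the extension. The argument is correct; the one point your density step leaves implicit is that for general $f \in L_{1}(G,\lambda,\mathcal{A})$ the inner integral $k \mapsto f(gk)$ is defined for $\mu$-almost every coset and the iterated integral genuinely represents the extended functional --- this Fubini-type refinement is the content of the Bourbaki citation and would need to be invoked (or reproved) to get the formula in its pointwise iterated form rather than merely as an identity of continuous extensions.
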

\begin{definition}\mbox{ }
A unit representation of $G$ is a homomorphism $L$ from $G$ into the group $U (H)$ of the invertible unitary linear operators on some nonzero Hilbert space $H,$ which is continuous with respect to
the strong operator topology satisfying for $g_{1}, g_{2} \in G$,
\[L_{( g_{1} g_{2})}  =  L_{g_{1}}L_{g_{2}} \quad \text{and  } \quad L_{1}  =  Id_{H}.\]
$H$ is called the representation space of $L$, and its dimension is called the dimension or degree of $L$.
$\\$

Suppose $\mathcal{M}$ is a closed subspace of $H$. 
$\mathcal{M}$ is called an invariant subspace for $L$ if $L_{g}\mathcal{M} \subset \mathcal{M}$ $\forall g \in G$. If $\mathcal{M}$ is invariant and $\mathcal{M} \neq \{ 0\},$ then $L^{\mathcal{M}}$ such that
 \[L^{\mathcal{M}}_{g} = L_{g} \vert_{\mathcal{M}}\]
 defines a representation of $G$ on $\mathcal{M}$, called a subrepresentation of $L$. 
 If $L$ admits an invariant subspace that is nontrivial (i.e. $\neq \{0\}$ or $H$) then $L$ is called reducible, otherwise $L$ is irreducible. If $G$ is compact and $L$ irreducible then the dimension of $L$ is finite.
\end{definition} 
\begin{definition} \mbox{ }
Two unit irreductible representations $L$ and $V$ into
 $H$ and $N,$ respectively, are said to be equivalent if there is an isomorphism
$T :  H\longrightarrow N $ such that, $\forall t \in G,$
\[  T\circ L_{t} = V_{t}\circ T.\]
\end{definition}
Consider now the subgroup $K$, $\Sigma$ the coset space (called the dual object of $ K $), $\sigma\in \Sigma,$ $L^{\sigma}$ a representative of $\sigma$, $H_{\sigma}$ a representation space of $L^{\sigma},$ and $d_{\sigma}$ its dimension.
\begin{theorem}\mbox{ }
Let  $(L_{ij}^{\sigma})_{1 \leq i,j \leq d_{\sigma}}$ be the matrix of $L^{\sigma}$ in an orthonormal  basis  $(\xi_{i})_{i=1}^{d\sigma}$ of $H_{\sigma}.$ Then,  (see \cite{folland,anahar,men,B-R}), 
 \begin{eqnarray} \label{sch1}
 \int_{K}L_{ij}^{\sigma}(t)\overline{L_{lm}^{\sigma}}(t)d \nu (t) & = & \dfrac{\delta _{il} \delta_{jm}}{d_{\sigma} }
\end{eqnarray}  
and
 \begin{eqnarray} \label{sch2}
 \int_{K}L_{ij}^{\sigma}(t)\overline{L_{lm}^{\tau}}(t)d \nu (t) & = & 0 \text{ if } \sigma \neq \tau.
\end{eqnarray}
\end{theorem}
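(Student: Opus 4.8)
The plan is to deduce both identities from Schur's Lemma by an averaging construction, working with honest matrices since compactness of $K$ forces every irreducible $L^{\sigma}$ to be finite-dimensional (as recorded in the definition of irreducibility above). Throughout I assume the Haar measure $\nu$ is normalized so that $\nu(K)=1$; without this normalization the constant in (\ref{sch1}) would be rescaled by $\nu(K)$.

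First, given irreducibles $L^{\sigma}$ on $H_{\sigma}$ and $L^{\tau}$ on $H_{\tau}$ and \emph{any} linear map $A:H_{\sigma}\to H_{\tau}$, I would form the averaged operator
\[
T \;=\; \int_{K} L^{\tau}_{t}\, A\, (L^{\sigma}_{t})^{*}\, d\nu(t).
\]
The key step is to verify that $T$ intertwines the two representations, i.e. $L^{\tau}_{s}T = T L^{\sigma}_{s}$ for every $s\in K$. This follows by pulling $L^{\tau}_{s}$ inside the integral to obtain $\int_{K} L^{\tau}_{st} A (L^{\sigma}_{t})^{*}\, d\nu(t)$, substituting $u=st$, and invoking \emph{left-invariance} of $\nu$ together with the unitarity identity $(L^{\sigma}_{s^{-1}})^{*}=L^{\sigma}_{s}$, which causes the factor $L^{\sigma}_{s}$ to emerge on the right.

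Second, I would apply Schur's Lemma. If $\sigma\neq\tau$ (inequivalent irreducibles) then $T=0$ for every $A$; if $\sigma=\tau$ then $T=c\,\mathrm{Id}_{H_{\sigma}}$ for a scalar $c=c(A)$. To extract the matrix identities, I choose $A=E_{jm}$, the elementary matrix with a single $1$ in position $(j,m)$. Reading off the $(i,l)$ entry of $T$, and using that the $(q,l)$ entry of $(L^{\sigma}_{t})^{*}$ is $\overline{L^{\sigma}_{lq}(t)}$, gives exactly $\int_{K} L^{\tau}_{ij}(t)\overline{L^{\sigma}_{lm}(t)}\,d\nu(t)$. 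In the inequivalent case this entry is $0$, which is (\ref{sch2}) after exchanging the labels $\sigma,\tau$. In the equivalent case the entry equals $c\,\delta_{il}$, so it remains only to pin down $c$.

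Finally, to compute $c$ I would take the trace: on one hand $\mathrm{tr}(T)=c\,d_{\sigma}$, and on the other, using the cyclic property of the trace and $(L^{\sigma}_{t})^{*}L^{\sigma}_{t}=\mathrm{Id}$,
\[
\mathrm{tr}(T) = \int_{K} \mathrm{tr}\big(A\,(L^{\sigma}_{t})^{*}L^{\sigma}_{t}\big)\, d\nu(t) = \mathrm{tr}(A)\,\nu(K) = \mathrm{tr}(A).
\]
With $A=E_{jm}$ one has $\mathrm{tr}(A)=\delta_{jm}$, hence $c=\delta_{jm}/d_{\sigma}$ and the $(i,l)$ entry becomes $\delta_{il}\delta_{jm}/d_{\sigma}$, which is (\ref{sch1}). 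The main obstacle is really Schur's Lemma itself, whose content (a nonzero intertwiner between irreducibles is invertible, and a self-intertwiner of an irreducible is scalar) rests on the finite-dimensional spectral theorem; I would either cite it from the references \cite{folland,anahar,men,B-R} or prove the scalar case directly by applying the eigenspace-invariance argument to a self-adjoint intertwiner.
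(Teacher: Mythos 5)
Your proof is correct and complete. For comparison: the paper gives no proof of this theorem at all---it is recalled in the Preliminaries as a classical fact with a pointer to \cite{folland,anahar,men,B-R}---and your argument (average an arbitrary $A:H_{\sigma}\to H_{\tau}$ into an intertwiner $T=\int_{K}L^{\tau}_{t}A\,(L^{\sigma}_{t})^{*}\,d\nu(t)$, apply Schur's lemma, and fix the scalar by a trace computation) is precisely the standard proof found in those references, including your correct observations that compactness of $K$ forces $d_{\sigma}<\infty$ and that the constant $1/d_{\sigma}$ requires the normalization $\nu(K)=1$.
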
  
Let  $q : G \longrightarrow G/K$ be the canonical quotient map of $G$ into $G/K$ and suppose $H_{\sigma}$ separable.
Denote by  $H^{L^{\sigma}}_{ 0}$ the set
 \begin{eqnarray}
H^{L^{\sigma}}_{ 0} = \left\lbrace u\in \mathfrak{C}(G,H_{\sigma})\mbox{ } : \mbox{ } q(\mbox{Supp(u))}\text{ is compact } \text{ and  } u(gk) = L^{\sigma}_{k^{-1}}u(g)\right\rbrace.
\end{eqnarray}
\begin{proposition}\mbox{ }
If $\eta :G \longrightarrow H_{\sigma}$ 
is  continuous  with compact support, then the function $u_{\eta}$ such that
\begin{eqnarray} \label{form}
u_{\eta}(g) = \int_{K}L^{\sigma}_{k}\eta(gk)d\nu(k)
\end{eqnarray} 
belongs to $H^{L^{\sigma}}_{ 0},$ and is uniformly continuous on $G$. Moreover, every 
element of $H^{L^{\sigma}}_{ 0}$ is of the form $u_{\eta}$. See \cite{folland} for more details.
\end{proposition}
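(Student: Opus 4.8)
The plan is to establish the three assertions in turn: the covariance identity together with compactness of $q(\mathrm{Supp}(u_\eta))$ (which together give $u_\eta\in H^{L^\sigma}_0$), then uniform continuity, and finally the surjectivity of the map $\eta\mapsto u_\eta$.

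First I would verify the covariance relation by a change of variable in $K$. Fixing $k_0\in K$,
\[
u_\eta(gk_0)=\int_K L^\sigma_k\,\eta(gk_0k)\,d\nu(k),
\]
and substituting $k'=k_0k$ (so $k=k_0^{-1}k'$) I use the left invariance of the Haar measure $\nu$ on $K$, $d\nu(k)=d\nu(k')$, together with the homomorphism property $L^\sigma_{k_0^{-1}k'}=L^\sigma_{k_0^{-1}}L^\sigma_{k'}$. The factor $L^\sigma_{k_0^{-1}}$ then pulls out of the integral, yielding $u_\eta(gk_0)=L^\sigma_{k_0^{-1}}u_\eta(g)$. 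For the support, if $\mathrm{Supp}(\eta)\subseteq C$ with $C$ compact, then $u_\eta(g)\neq 0$ forces $gk\in C$ for some $k\in K$, so $\mathrm{Supp}(u_\eta)\subseteq CK$, which is compact as the continuous image of $C\times K$ under multiplication; hence $q(\mathrm{Supp}(u_\eta))$ is compact.

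Next I would prove uniform continuity. Since $\eta$ is continuous with compact support on the locally compact group $G$, it is uniformly continuous: for every $\ep>0$ there is a neighbourhood $V$ of the identity such that $\|\eta(x)-\eta(vx)\|<\ep$ for all $x\in G$ and $v\in V$. Because $L^\sigma$ is unitary, $\|L^\sigma_k\|=1$, so for $g_2=vg_1$ with $v\in V$,
\[
\|u_\eta(g_1)-u_\eta(g_2)\|\le \int_K\bigl\|\eta(g_1k)-\eta(vg_1k)\bigr\|\,d\nu(k)\le \nu(K)\,\ep,
\]
the bound being uniform in $g_1$ (here $\nu(K)<\infty$ since $K$ is compact). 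This is precisely uniform continuity, and ordinary continuity follows a fortiori.

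The last and hardest claim is surjectivity. Given $u\in H^{L^\sigma}_0$, I would seek $\eta$ of the special form $\eta=\phi\,u$ with $\phi\in\mathcal{K}(G)$ scalar; using the covariance of $u$ and $L^\sigma_kL^\sigma_{k^{-1}}=\mathrm{Id}$ one finds $u_\eta(g)=\bigl(\int_K\phi(gk)\,d\nu(k)\bigr)u(g)$. It therefore suffices to produce $\phi\in\mathcal{K}(G)$, $\phi\ge 0$, whose $K$-average $\Phi(g)=\int_K\phi(gk)\,d\nu(k)$ equals $1$ on the compact set $E=q(\mathrm{Supp}(u))$; then $u_\eta=u$, since both sides vanish off $\mathrm{Supp}(u)$. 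The main obstacle is exactly this construction, that is, the surjectivity of the averaging map $\mathcal{K}(G)\to\mathcal{K}(G/K)$. I would realize it concretely: note first that $\mathrm{Supp}(u)$ is right $K$-invariant (as $\|u(gk)\|=\|u(g)\|$) and compact; choose $\phi_0\in\mathcal{K}(G)$ with $0\le\phi_0\le 1$ and $\phi_0\equiv 1$ on $\mathrm{Supp}(u)$, so its average $\Phi_0$ is continuous, right $K$-invariant, and strictly positive on $E$. Then pick $\psi\in\mathcal{K}(G/K)$ with $0\le\psi\le 1$, $\psi\equiv 1$ on $E$, and $\mathrm{Supp}\,\psi\subseteq\{\Phi_0>0\}$, and set $\phi=(\psi\circ q)\,\phi_0/\Phi_0$, extended by $0$. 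Right $K$-invariance of both $\Phi_0$ and $\psi\circ q$ gives $\Phi=\psi\circ q$, hence $\Phi\equiv 1$ on $E$, which finishes the argument. This surjectivity is the standard Bruhat-type lemma that the paper may simply invoke from \cite{folland}.
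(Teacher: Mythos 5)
Your proposal is correct, but there is nothing in the paper to compare it against: this proposition appears in the Preliminaries as a recalled fact, stated with only the citation to Folland and no proof at all. What you have written is essentially the standard argument from Folland that the paper implicitly invokes: covariance via left invariance of $\nu$ and the homomorphism property of $L^{\sigma}$, support control via $\mathrm{Supp}(u_{\eta})\subseteq \mathrm{Supp}(\eta)K$, uniform continuity from the uniform continuity of compactly supported functions plus unitarity of $L^{\sigma}_{k}$, and surjectivity via the ansatz $\eta=\phi\,u$, which reduces the problem to the Bruhat-type averaging lemma you state. Two refinements are worth noting. First, your surjectivity construction can be shortened: since $\Vert u(gk)\Vert=\Vert u(g)\Vert$, the set $\mathrm{Supp}(u)$ is $K$-saturated, i.e. $\mathrm{Supp}(u)=q^{-1}\bigl(q(\mathrm{Supp}(u))\bigr)$, so for any $\phi_{0}\in\mathcal{K}(G)$ with $\phi_{0}\equiv 1$ on $\mathrm{Supp}(u)$ the average $\Phi_{0}(g)=\int_{K}\phi_{0}(gk)\,d\nu(k)$ is identically equal to $\nu(K)$ on $\mathrm{Supp}(u)$ (every $gk$ remains in $\mathrm{Supp}(u)$); hence $\phi=\phi_{0}/\nu(K)$ already gives $u_{\eta}=u$, and the auxiliary function $\psi$ and the division by $\Phi_{0}$ — which are needed for the general lemma about an arbitrary compact subset of $G/K$ — are superfluous here. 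Second, you assert without proof that $\mathrm{Supp}(u)$ is compact; this does require an argument, namely that $q$ is a proper map when $K$ is compact: choose a compact $C\subseteq G$ with $q(C)=q(\mathrm{Supp}(u))$, then $q^{-1}\bigl(q(\mathrm{Supp}(u))\bigr)$ is a closed subset of the compact set $CK$, hence compact. With that one-line fix your proof is complete and self-contained, which is more than the paper provides.
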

\begin{proposition}\mbox{ }
The mapping: 
\begin{eqnarray}
(u,v)&\longmapsto & \left<u,v \right> = \int_{G/K}\left<u(g),v(g)\right>_{H_{\sigma}}d\mu(\dot{g}).
\end{eqnarray}
on  $H^{L^{\sigma}}_{0} \times H^{L^{\sigma}}_{0}$ is an inner product on $H^{L^{\sigma}}_{0}.$ 
\end{proposition}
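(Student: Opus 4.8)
The plan is to verify the three defining axioms of an inner product --- sesquilinearity, conjugate symmetry, and positive definiteness --- but before any of these the essential preliminary is to check that the integrand descends to a well-defined, integrable function on $G/K$. First I would show that for fixed $u,v \in H^{L^{\sigma}}_{0}$ the map $g \mapsto \langle u(g),v(g)\rangle_{H_{\sigma}}$ is constant on each left coset $gK$. Indeed, using the covariance relation $u(gk)=L^{\sigma}_{k^{-1}}u(g)$ together with the fact that each $L^{\sigma}_{k^{-1}}$ is unitary, one gets
\[
\langle u(gk),v(gk)\rangle_{H_{\sigma}} = \langle L^{\sigma}_{k^{-1}}u(g), L^{\sigma}_{k^{-1}}v(g)\rangle_{H_{\sigma}} = \langle u(g),v(g)\rangle_{H_{\sigma}},
\]
so the integrand factors through $q$ and defines a genuine function on $G/K$. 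Continuity of $u$, $v$ and of the Hilbert-space inner product makes this function continuous on $G/K$ (the quotient map being open), while its support is contained in $q(\mathrm{Supp}\,u)$, which is compact by the definition of $H^{L^{\sigma}}_{0}$. Hence the integrand is a continuous, compactly supported function on $G/K$ and its Radon integral against $\mu$ is finite, so the pairing is well-defined.

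The algebraic axioms then follow routinely. Linearity in the first variable and conjugate linearity in the second are inherited from the corresponding properties of $\langle\cdot,\cdot\rangle_{H_{\sigma}}$ and the linearity of the integral, once one observes that $H^{L^{\sigma}}_{0}$ is itself a vector space (a finite linear combination of functions satisfying the covariance relation and having compact image under $q$ again satisfies both conditions). Conjugate symmetry $\langle u,v\rangle = \overline{\langle v,u\rangle}$ comes from the identity $\langle u(g),v(g)\rangle_{H_{\sigma}} = \overline{\langle v(g),u(g)\rangle_{H_{\sigma}}}$ together with the fact that $\mu$ is a positive, hence real, measure, so complex conjugation passes through the integral.

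The only step requiring genuine care, and the one I expect to be the main obstacle, is strict positive definiteness. Nonnegativity $\langle u,u\rangle = \int_{G/K}\Vert u(g)\Vert^{2}_{H_{\sigma}}\,d\mu(\dot g) \ge 0$ is immediate, but deducing $u=0$ from $\langle u,u\rangle = 0$ needs the support of $\mu$ to be all of $G/K$. Here I would invoke the $G$-invariance: since $\mu$ is a nonzero $G$-invariant measure and $G$ acts transitively on $G/K$, every nonempty open subset of $G/K$ has strictly positive $\mu$-measure. Therefore, if the continuous nonnegative function $\dot g \mapsto \Vert u(g)\Vert^{2}_{H_{\sigma}}$ were positive at some point, it would be bounded below by a positive constant on an open neighborhood of that point, forcing the integral to be strictly positive --- a contradiction. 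Hence $u(g)=0$ for every $g$, that is $u=0$, which completes the verification that the pairing is an inner product on $H^{L^{\sigma}}_{0}$.
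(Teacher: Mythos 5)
Your proof is correct, and in fact it supplies more than the paper does: the paper states this proposition in its Preliminaries \emph{without any proof}, treating it as part of the standard construction of induced representations (the neighboring propositions cite Folland), so there is no argument of the paper's to compare yours against. Your verification is the standard one, with the emphasis in the right places. The covariance relation $u(gk)=L^{\sigma}_{k^{-1}}u(g)$ together with unitarity of $L^{\sigma}_{k^{-1}}$ makes the integrand constant on cosets, and the compactness of $q(\mathrm{Supp}\,u)$ demanded in the definition of $H^{L^{\sigma}}_{0}$ makes it a continuous, compactly supported function on $G/K$, so the integral exists; sesquilinearity and Hermitian symmetry are routine; and positive definiteness is indeed the only delicate point, resolved correctly by your observation that a nonzero $G$-invariant measure on the transitive $G$-space $G/K$ has full support (its support is a nonempty closed $G$-invariant set, hence all of $G/K$), so a continuous nonnegative function with vanishing integral is identically zero, forcing $u=0$ as a function. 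Two hypotheses you use silently are harmless but worth naming explicitly: first, $\mu\neq 0$, which is guaranteed by the quotient integration formula $\int_{G}f\,d\lambda=\int_{G/K}\int_{K}f(gk)\,d\nu(k)\,d\mu(\dot g)$ recalled in the paper; second, the Hausdorffness of $G/K$ (automatic here because the compact subgroup $K$ is closed in $G$), which is what lets you treat the compact set $q(\mathrm{Supp}\,u)$ as closed and push continuity of the coset-invariant integrand down through the open quotient map $q$.
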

$ G$ acts on $H^{L^{\sigma}}_{0}$ by left translation, $u \longmapsto L_{t}u$, so we obtain a unitary
representation of $G$ with respect to this inner product on $H^{L^{\sigma}}_{0}$.
The inner product is preserved by left translations, since $\mu$ is invariant. Hence, if we denote by $H^{L^{\sigma}}$ the Hilbert space completion of $H^{L^{\sigma}}_{0},$ the translation operators $L_{t}$  extend to unitary operators on $H^{L^{\sigma}}$. Then the map $t \longmapsto L_{t}u$ is continuous from $G$ to $H^{L^{\sigma}}$ for each $u \in H^{L^{\sigma}}_{0},$ and then, since the operators $L_{t}$ are uniformly bounded, they are strongly continuous on $H^{L^{\sigma}}.$ Hence, they define a unitary representation of G, called the representation induced by $L^{\sigma},$  denoted by $U^{L^{\sigma}}:$
\[ U_{t}^{L^{\sigma}}u(g) = L_{t}u(g) = u(t^{-1}g).\]
The representation space is denoted $H^{L^{\sigma}}$.
\begin{remark}\mbox{ }
\begin{itemize}
\item The representations of $G$ induced from $K$ are generally infinite-dimensional unless $G/K$ is a finite set.
\item
If induced representation $U^{L^{\sigma}}$ is irreducible then $L^{\sigma}$ is irreducible\cite{B-R}.\\
The converse of this statement is false.
\item $U^{L^{\sigma}} \in U(H^{L\sigma}),$ where  $U(H^{L\sigma})$  designates the group of  invertible unitary linear operators on $H^{L\sigma}$.
\end{itemize}
\end{remark} 
\textbf{Lebesgue Spaces}\\
Let $\lambda$ be a positve measure on the group $G$ into a Banach algebra $\mathcal{A}$.
For $f: G \longrightarrow \mathcal{A}$, put
\begin{eqnarray}
N_{p} (f) & = & \left( \int_{G}^{*}\left\Vert f(t) \right\Vert_{\mathcal{A}}^{p}d\lambda(t)\right)^{\dfrac{1}{p}}
\end{eqnarray}
$1 \leq p < \infty$, where  $\displaystyle\int_{G}^{*}$ designates the upper integral \cite{Bar} and 
\begin{eqnarray}
N_{\infty} (f) & = &  \inf \left\lbrace \alpha: \Vert f(t) \Vert _{\mathcal{A}} \leq \alpha \quad\lambda -\text{ almost everywhere }\right\rbrace. 
\end{eqnarray}
 $\mathcal{L}_{p}(G, \lambda, \mathcal{A})$  denotes the set of all  $\lambda$-measurable functions $f: G \longrightarrow \mathcal{A}$ such $N_{p}(f) < \infty$, $1\leq p \leq \infty$.
Generally the mapping $f \longmapsto N_{p}(f)$ is not a norm but a seminorm in $\mathcal{L}_{p}(G, \lambda, \mathcal{A})$,  $1\leq p \leq \infty$.

The relation $\mathcal{R}$ defined by $f \mathcal{R} g$ if and only if $f- g $ is null $\lambda$ almost everywhere is an equivalent relation in $\mathcal{L}_{p}(G, \lambda, \mathcal{A})$. Denoting the 
equivalence class of $f \in\mathcal{L}_{p}(G, \lambda, \mathcal{A})$ by $\lbrack f\rbrack$, it follows that $\Vert  \lbrack f\rbrack \Vert = N_{p}(f).$
\begin{definition}
The symbol $L_{p}(G, \lambda, \mathcal{A})$ will denote the set of equivalence classes $\lbrack f\rbrack$ of functions $f \in\mathcal{L}_{p}(G, \lambda, \mathcal{A})$.
\end{definition}
\begin{theorem}
The space $L_{p}(G, \lambda, \mathcal{A})$  is a normed linear space.
\end{theorem}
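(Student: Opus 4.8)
The plan is to verify directly that $\mathcal{L}_{p}(G,\lambda,\mathcal{A})$ is a vector space on which $N_{p}$ is a seminorm, and then to deduce that the quotient $L_{p}(G,\lambda,\mathcal{A})$ carries a genuine norm. The whole argument reduces each property of $N_{p}$ to the corresponding statement for the scalar nonnegative function $\varphi_{f}(t) := \Vert f(t)\Vert_{\mathcal{A}}$, since by definition $N_{p}(f)$ depends on $f$ only through $\varphi_{f}$. Thus I first record the three seminorm axioms, then promote them to the quotient.

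First I would establish homogeneity and the triangle inequality for $N_{p}$. Homogeneity is immediate: for a scalar $\alpha$ one has $\varphi_{\alpha f} = |\alpha|\,\varphi_{f}$ pointwise, and pulling the constant $|\alpha|^{p}$ (resp.\ $|\alpha|$ when $p=\infty$) out of the upper integral gives $N_{p}(\alpha f) = |\alpha|\,N_{p}(f)$; nonnegativity is clear. The substantive point is the triangle inequality (Minkowski). By the triangle inequality in the Banach algebra $\mathcal{A}$ we have the pointwise bound $\varphi_{f+g}(t) \le \varphi_{f}(t) + \varphi_{g}(t)$. Using the monotonicity of the upper integral together with the classical scalar Minkowski inequality in the upper-integral formulation (see \cite{Bar}) I obtain, for $1\le p<\infty$,
\[
N_{p}(f+g) \le \Big(\int_{G}^{*}(\varphi_{f}+\varphi_{g})^{p}\,d\lambda\Big)^{1/p} \le N_{p}(f) + N_{p}(g),
\]
and for $p=\infty$ the same bound follows from $\varphi_{f+g}\le \varphi_{f}+\varphi_{g}$ $\lambda$-a.e.\ and the definition of the essential supremum. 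In particular $N_{p}(f+g)<\infty$ whenever $N_{p}(f),N_{p}(g)<\infty$, and since $\alpha f$ is again $\lambda$-measurable with $N_{p}(\alpha f)<\infty$, the set $\mathcal{L}_{p}(G,\lambda,\mathcal{A})$ is closed under addition and scalar multiplication, hence is a vector space and $N_{p}$ is a seminorm on it.

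It remains to pass to the quotient by $\mathcal{R}$. Two facts about the upper integral are needed here, both standard for positive measures: it is insensitive to $\lambda$-null sets, and $N_{p}(\varphi)=0$ for a nonnegative measurable $\varphi$ forces $\varphi=0$ $\lambda$-a.e. The first gives well-definedness: if $f\,\mathcal{R}\,f'$ then $\varphi_{f}=\varphi_{f'}$ $\lambda$-a.e., so $N_{p}(f)=N_{p}(f')$ and the prescription $\Vert[f]\Vert := N_{p}(f)$ does not depend on the chosen representative. The seminorm axioms descend verbatim to $\Vert\cdot\Vert$ on $L_{p}(G,\lambda,\mathcal{A})$, so only definiteness must be checked: if $\Vert[f]\Vert = N_{p}(f)=0$, the second fact yields $\varphi_{f}=\Vert f(\cdot)\Vert_{\mathcal{A}}=0$ $\lambda$-a.e., i.e.\ $f=0$ $\lambda$-a.e., which means exactly $f\,\mathcal{R}\,0$ and hence $[f]=[0]$. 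Therefore $\Vert\cdot\Vert$ separates points, completing the proof that $L_{p}(G,\lambda,\mathcal{A})$ is a normed linear space. The only genuine obstacle is the Minkowski step: everything else is bookkeeping, and the work is concentrated in invoking the scalar Minkowski inequality for the upper integral and in reducing the vector-valued inequality to it through the pointwise triangle inequality in $\mathcal{A}$.
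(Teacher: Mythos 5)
Your proof is correct, but there is nothing in the paper to compare it against: the paper states this theorem in its preliminaries as a recalled, standard fact and gives no proof at all (indeed it immediately follows it with two further unproved statements, that $N_{p}$ induces the norm $\Vert\cdot\Vert_{p}$ and then the H\"older and Minkowski inequalities). Your argument is the standard one and it fills this gap correctly: reduce every axiom for $N_{p}$ to the scalar function $t\mapsto\Vert f(t)\Vert_{\mathcal{A}}$, get subadditivity from the pointwise triangle inequality in $\mathcal{A}$ combined with the scalar Minkowski inequality for upper integrals, conclude that $\mathcal{L}_{p}(G,\lambda,\mathcal{A})$ is a vector space carrying the seminorm $N_{p}$, and then pass to the quotient by the null-equivalence relation $\mathcal{R}$, where insensitivity of the upper integral to null sets gives well-definedness and the implication $N_{p}(f)=0\Rightarrow f=0$ $\lambda$-a.e.\ gives definiteness. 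One structural point in your favor deserves emphasis: since the paper places its Minkowski inequality for $L_{p}(G,\lambda,\mathcal{A})$ \emph{after} this theorem, appealing to that later theorem here would be circular (the triangle inequality for the quotient norm \emph{is} Minkowski); you avoid this by invoking the scalar upper-integral version from the reference \cite{Bar} instead, which is the right move. The only detail left implicit is that $f+g$ is again $\lambda$-measurable when $f$ and $g$ are, which is needed for closure of $\mathcal{L}_{p}(G,\lambda,\mathcal{A})$ under addition; this is standard for vector-valued measurability and harmless to cite.
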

In the sequel the symbol $f$ rather than $ \lbrack f\rbrack $ will be used for an element in $L_{p}(G, \lambda, \mathcal{A})$.
\begin{theorem}
$N_{p} $ define a norm in $L_{p}(G, \lambda, \mathcal{A})$ noted $\Vert . \Vert_{p}$.
\end{theorem}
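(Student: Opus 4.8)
The plan is to verify the three defining axioms of a norm—positive definiteness, absolute homogeneity, and the triangle inequality—for $\|\cdot\|_p = N_p$ on the quotient space $L_p(G,\lambda,\mathcal{A})$. Since the excerpt already records that $f \mapsto N_p(f)$ is a seminorm on $\mathcal{L}_p(G,\lambda,\mathcal{A})$, the homogeneity and the subadditivity are inherited by the quotient, and the only genuinely new fact to establish is that $N_p$ separates points of $L_p$. This separation is precisely the gain obtained by passing from $\mathcal{L}_p$ to its quotient by the relation $\mathcal{R}$, and it is where the construction of $L_p$ does its essential work.

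First I would check that $\|[f]\|_p := N_p(f)$ is well defined, i.e. independent of the chosen representative: if $f \mathcal{R} g$ then $f - g = 0$ $\lambda$-a.e., so $\|f(t)\|_{\mathcal{A}} = \|g(t)\|_{\mathcal{A}}$ $\lambda$-a.e., and the upper integrals (resp. the essential suprema for $p=\infty$) coincide. For definiteness, suppose $\|[f]\|_p = N_p(f) = 0$. For $1 \le p < \infty$ this reads $\int_G^* \|f(t)\|_{\mathcal{A}}^p \, d\lambda(t) = 0$, and since $t \mapsto \|f(t)\|_{\mathcal{A}}^p$ is nonnegative, the standard property of the upper integral forces $\|f(t)\|_{\mathcal{A}}^p = 0$ for $\lambda$-almost every $t$, hence $f(t) = 0$ $\lambda$-a.e., so $f \mathcal{R} 0$ and $[f] = [0]$. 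For $p = \infty$, the condition $N_\infty(f) = 0$ gives $\|f(t)\|_{\mathcal{A}} \le 0$ a.e., so again $f = 0$ a.e. The converse implication, that $[f]=[0]$ yields $N_p(f)=0$, is immediate.

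Homogeneity is routine: for a scalar $\alpha \in \mathbb{K}$ we have $\|\alpha f(t)\|_{\mathcal{A}} = |\alpha|\,\|f(t)\|_{\mathcal{A}}$ pointwise, so pulling $|\alpha|^p$ out of the upper integral gives $N_p(\alpha f) = |\alpha|\,N_p(f)$ (with the evident variant for $p=\infty$), which descends to $\|[\alpha f]\|_p = |\alpha|\,\|[f]\|_p$. For the triangle inequality I would invoke Minkowski's inequality at the level of $\mathcal{L}_p$, working with the scalar pointwise norms: one starts from $\|f(t)+g(t)\|_{\mathcal{A}} \le \|f(t)\|_{\mathcal{A}} + \|g(t)\|_{\mathcal{A}}$, and for $1 < p < \infty$ combines this with Hölder's inequality for the conjugate exponent $q = p/(p-1)$ through the decomposition $\|f+g\|_{\mathcal{A}}^p \le \|f+g\|_{\mathcal{A}}^{p-1}\,\|f\|_{\mathcal{A}} + \|f+g\|_{\mathcal{A}}^{p-1}\,\|g\|_{\mathcal{A}}$. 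The cases $p = 1$ and $p = \infty$ follow directly from monotonicity of the upper integral and from the essential supremum estimate $\|f(t)+g(t)\|_{\mathcal{A}} \le N_\infty(f) + N_\infty(g)$ a.e. Passing to equivalence classes then yields $\|[f+g]\|_p \le \|[f]\|_p + \|[g]\|_p$.

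The main obstacle, as always for $L_p$ spaces, is the triangle inequality, i.e. Minkowski's inequality, whose proof rests on Hölder's inequality; in the present vector-valued setting the only points requiring care are that every scalar estimate is applied to the pointwise norms $\|f(t)\|_{\mathcal{A}}$ rather than to the $\mathcal{A}$-valued functions themselves, and that the upper integral $\int_G^*$ is used monotonically throughout. Once these are in place, the three axioms combine to show that $\|\cdot\|_p = N_p$ is a genuine norm on $L_p(G,\lambda,\mathcal{A})$, the definiteness being exactly the property upgraded by the quotient from the seminorm on $\mathcal{L}_p(G,\lambda,\mathcal{A})$.
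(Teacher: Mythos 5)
Your argument is correct, but there is no proof in the paper to compare it against: the statement sits in the preliminaries among recalled standard facts about vector-valued Lebesgue spaces, and the paper proves none of them --- indeed it states the H\"older and Minkowski inequalities as separate theorems immediately \emph{after} this one, so in the paper's organization the triangle inequality is a quoted result rather than something to be derived. Your route --- well-definedness of $\Vert [f]\Vert_{p}$ on the quotient, definiteness from the fact that a nonnegative function with vanishing upper integral is null $\lambda$-almost everywhere, homogeneity, and Minkowski obtained from H\"older applied to the scalar functions $t \mapsto \Vert f(t)\Vert_{\mathcal{A}}$ --- is the standard textbook proof and is essentially complete. Two small points would tighten it: in the Minkowski step, before dividing by $N_{p}(f+g)^{p/q}$ you must know that $N_{p}(f+g)$ is finite (which follows from the pointwise convexity bound $\Vert f(t)+g(t)\Vert_{\mathcal{A}}^{p} \leq 2^{p-1}\bigl(\Vert f(t)\Vert_{\mathcal{A}}^{p} + \Vert g(t)\Vert_{\mathcal{A}}^{p}\bigr)$) and treat the trivial case $N_{p}(f+g)=0$ separately; and the implication from $\int_{G}^{*}\Vert f(t)\Vert_{\mathcal{A}}^{p}\,d\lambda(t)=0$ to $f=0$ a.e.\ uses the $\lambda$-measurability of $f$, which is built into the definition of $\mathcal{L}_{p}(G,\lambda,\mathcal{A})$ and is worth citing explicitly since the integral here is an upper integral rather than an ordinary one.
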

\begin{theorem} (H\"older inequality))
Consider $1 \leq p \leq \infty$, $q \geq 1$ such that $\dfrac{1}{p} + \dfrac{1}{q} = 1$ ( if $p = 1$, $q = \infty$) $f\in L_{p}(G, \lambda, \mathcal{A})$ and $ g \in L_{q}(G, \lambda, \mathcal{A})$  then the function $fg$ is $\lambda$-integrable ie $fg\in L_{1}(G, \lambda, \mathcal{A})$, and 
\begin{eqnarray}
\Vert fg \Vert_{1} & \leq & \Vert f \Vert _{p} \Vert g \Vert_{q}.
\end{eqnarray} 
\end{theorem}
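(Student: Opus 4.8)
The plan is to reduce the vector-valued inequality to the classical scalar Hölder inequality by exploiting the submultiplicativity of the norm on the Banach algebra $\mathcal{A}$. I would write $\phi(t) = \Vert f(t) \Vert_{\mathcal{A}}$ and $\psi(t) = \Vert g(t) \Vert_{\mathcal{A}}$; these are nonnegative scalar functions on $G$, and by the very definition of the seminorms $N_{p}$ one has $\phi \in L_{p}(G,\lambda,\mathbb{R})$ with $N_{p}(\phi) = \Vert f \Vert_{p}$, and likewise $\psi \in L_{q}(G,\lambda,\mathbb{R})$ with $N_{q}(\psi) = \Vert g \Vert_{q}$. Because $\mathcal{A}$ is a Banach algebra, the pointwise product $(fg)(t) = f(t)g(t)$ satisfies
\[
\Vert (fg)(t) \Vert_{\mathcal{A}} = \Vert f(t) g(t) \Vert_{\mathcal{A}} \leq \Vert f(t) \Vert_{\mathcal{A}}\, \Vert g(t) \Vert_{\mathcal{A}} = \phi(t)\psi(t),
\]
so that everything follows once the scalar estimate $\int_{G}^{*} \phi\psi\, d\lambda \leq N_{p}(\phi) N_{q}(\psi)$ is in hand.

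For the core case $1 < p < \infty$ I would first dispose of the trivial situations $\Vert f \Vert_{p} = 0$ or $\Vert g \Vert_{q} = 0$ (which force $\phi\psi = 0$ almost everywhere), and otherwise normalize by replacing $\phi,\psi$ with $\phi/N_{p}(\phi)$ and $\psi/N_{q}(\psi)$. Then I would invoke Young's inequality, $ab \leq a^{p}/p + b^{q}/q$ for $a,b \geq 0$, pointwise with $a = \phi(t)/N_{p}(\phi)$ and $b = \psi(t)/N_{q}(\psi)$, and integrate. The right-hand side integrates to $1/p + 1/q = 1$, giving $\int_{G}^{*} \phi\psi\, d\lambda \leq N_{p}(\phi)N_{q}(\psi)$. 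Combining this with the submultiplicativity bound above yields $N_{1}(fg) \leq \Vert f \Vert_{p} \Vert g \Vert_{q} < \infty$, which is exactly the asserted inequality together with the membership $fg \in L_{1}(G,\lambda,\mathcal{A})$.

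The endpoint $p = 1$, $q = \infty$ must be treated separately. Here $\psi(t) = \Vert g(t)\Vert_{\mathcal{A}} \leq N_{\infty}(g)$ for $\lambda$-almost every $t$ by the definition of $N_{\infty}$, so $\Vert (fg)(t)\Vert_{\mathcal{A}} \leq N_{\infty}(g)\,\phi(t)$ almost everywhere, and integrating gives $N_{1}(fg) \leq N_{\infty}(g)\, N_{1}(f) = \Vert f \Vert_{1} \Vert g \Vert_{\infty}$; the case $p=\infty$, $q=1$ is symmetric.

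The step I expect to require the most care is not the analytic inequality itself but the measurability and integrability bookkeeping in the vector-valued setting. One must check that $fg$ is $\lambda$-measurable, which follows from the $\lambda$-measurability of $f$ and $g$ together with the continuity of the multiplication map $\mathcal{A}\times\mathcal{A}\to\mathcal{A}$, and one must justify treating the upper integral $\int_{G}^{*}$ appearing in the definition of $N_{1}$ as an ordinary integral once the dominating scalar function $\phi\psi$ has been shown to be integrable. Both points are routine, but they are the only places where the Banach-algebra-valued formulation genuinely departs from the scalar theorem, and so they are where I would concentrate the written argument.
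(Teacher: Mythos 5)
Your proof is correct. There is, however, nothing in the paper to compare it against: the H\"older inequality is stated in the paper's preliminaries (Section 2) as one of several standard facts about the spaces $L_{p}(G,\lambda,\mathcal{A})$ recalled without proof from the literature on vector-valued integration, so your argument supplies a proof the paper omits rather than parallels. What you give is the standard one: reduce to the scalar case via submultiplicativity of the Banach-algebra norm, $\Vert f(t)g(t)\Vert_{\mathcal{A}}\leq\Vert f(t)\Vert_{\mathcal{A}}\,\Vert g(t)\Vert_{\mathcal{A}}$, prove the scalar estimate by normalization and Young's inequality for $1<p<\infty$, and handle the endpoints $p=1,\ q=\infty$ and $p=\infty,\ q=1$ by the essential-supremum bound. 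The two points you flag as needing care are indeed the only genuinely vector-valued ones, and your resolutions are right: $\lambda$-measurability of $fg$ follows from measurability of $(f,g)$ composed with the continuous multiplication $\mathcal{A}\times\mathcal{A}\to\mathcal{A}$, and once $\phi\psi=\Vert f(\cdot)\Vert_{\mathcal{A}}\Vert g(\cdot)\Vert_{\mathcal{A}}$ is measurable the upper integral $\int_{G}^{*}$ in the definition of $N_{1}$ coincides with the ordinary integral (alternatively, subadditivity and monotonicity of $\int_{G}^{*}$ already suffice for the Young step). One tiny remark: in the degenerate case $N_{p}(\phi)=0$ your conclusion that $\phi\psi=0$ almost everywhere uses that $\psi$ is finite-valued, which holds since $\psi(t)=\Vert g(t)\Vert_{\mathcal{A}}$ is a norm of an element of $\mathcal{A}$; it is worth saying explicitly, but it is not a gap.
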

\begin{theorem} (Minkowski inequality)
For $1 \leq p \leq \infty$ and $f, g \in  L_{p}(G, \lambda, \mathcal{A})$, we have 
\begin{eqnarray}
\Vert f + g \Vert _{p} & \leq & \Vert f \Vert_{p} + \Vert g \Vert_{p}.
\end{eqnarray}
\end{theorem}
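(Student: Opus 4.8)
The plan is to reduce the vector-valued inequality to the classical scalar Minkowski inequality by passing to the scalar norm functions, so that the only structure of $\mathcal{A}$ actually used is the triangle inequality of its norm. First I would set $\phi(t) = \Vert f(t)\Vert_{\mathcal{A}}$ and $\psi(t) = \Vert g(t)\Vert_{\mathcal{A}}$, which are nonnegative $\lambda$-measurable scalar functions on $G$ with $N_{p}(\phi) = \Vert f\Vert_{p}$ and $N_{p}(\psi) = \Vert g\Vert_{p}$. The triangle inequality in $\mathcal{A}$ gives the pointwise bound $\Vert f(t)+g(t)\Vert_{\mathcal{A}} \leq \phi(t)+\psi(t)$ for all $t\in G$, and since $N_{p}$ acts on $f$ only through the scalar function $t\mapsto\Vert f(t)\Vert_{\mathcal{A}}$ and is monotone on nonnegative functions, this yields $N_{p}(f+g) \leq N_{p}(\phi+\psi)$, the right-hand side being the scalar $L_{p}$-seminorm of $\phi+\psi$.

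Before anything else I would verify that $f+g \in L_{p}(G,\lambda,\mathcal{A})$, i.e. that $N_{p}(f+g)$ is finite. For $1\leq p<\infty$ this follows from the convexity estimate $(a+b)^{p}\leq 2^{p-1}(a^{p}+b^{p})$ applied to $a=\phi(t)$, $b=\psi(t)$, together with $f,g\in L_{p}$; for $p=\infty$ it is immediate from the essential-supremum definition of $N_{\infty}$. It then remains only to establish the scalar inequality $N_{p}(\phi+\psi)\leq N_{p}(\phi)+N_{p}(\psi)$ for nonnegative $\phi,\psi$.

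For this scalar step I would dispose of the endpoint cases $p=1$ and $p=\infty$ directly (integrating, respectively taking the essential supremum of, the pointwise bound). For $1<p<\infty$ I would run the standard argument: write
\begin{eqnarray*}
N_{p}(\phi+\psi)^{p} &=& \int_{G} (\phi+\psi)^{p-1}(\phi+\psi)\, d\lambda \\
&\leq& \int_{G} (\phi+\psi)^{p-1}\phi \, d\lambda + \int_{G} (\phi+\psi)^{p-1}\psi\, d\lambda,
\end{eqnarray*}
and apply the Hölder inequality of the preceding theorem, in its scalar form (the case $\mathcal{A}=\mathbb{K}$), to each summand with conjugate exponents $p$ and $q=p/(p-1)$. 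Since $(p-1)q=p$, this bounds the right-hand side by $\big(N_{p}(\phi)+N_{p}(\psi)\big)\,N_{p}(\phi+\psi)^{p/q}$. Dividing by $N_{p}(\phi+\psi)^{p/q}$ (the case $N_{p}(\phi+\psi)=0$ being trivial) and using $p-p/q=1$ gives the claim.

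The only genuine subtlety, and what I expect to be the main obstacle, lies in the very first step of the reduction: one must be sure that the passage from the $\mathcal{A}$-valued functions to their scalar norm functions is legitimate, namely that $t\mapsto\Vert f(t)+g(t)\Vert_{\mathcal{A}}$ is $\lambda$-measurable and that $N_{p}$ genuinely depends on $f$ only through this scalar function. Both hold because $N_{p}$ was defined through $\Vert f(t)\Vert_{\mathcal{A}}$ and because measurability of $f,g$ into $\mathcal{A}$ transfers, via continuity of the norm, to measurability of the scalar functions. Once this is secured no further structure of the Banach algebra $\mathcal{A}$ (in particular not its multiplication) intervenes, and the whole estimate is inherited from the scalar theory.
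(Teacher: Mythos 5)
Your proof is correct, but there is nothing in the paper to compare it against: the Minkowski inequality appears in the Preliminaries as a recalled background fact, stated without any proof (it is imported from standard vector-integration theory, alongside the H\"older inequality and the remark that follows it). Your argument is the standard way to supply the missing proof, and it is sound: the reduction to the scalar functions $\phi(t)=\Vert f(t)\Vert_{\mathcal{A}}$, $\psi(t)=\Vert g(t)\Vert_{\mathcal{A}}$ uses only the triangle inequality and monotonicity of the upper integral defining $N_{p}$; the finiteness of $N_{p}(f+g)$ via $(a+b)^{p}\leq 2^{p-1}(a^{p}+b^{p})$ is needed before the division step and you correctly place it there; and the scalar case then follows by the classical splitting of $(\phi+\psi)^{p}$ and H\"older with exponents $p$ and $q=p/(p-1)$. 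One small point worth making explicit if this were written out: the paper's H\"older theorem is stated for $\mathcal{A}$-valued functions with the product taken in the Banach algebra, and you invoke only its scalar instance $\mathcal{A}=\mathbb{K}$, which is legitimate since $\mathbb{K}$ is itself a Banach algebra; also, applying it to the factor $(\phi+\psi)^{p-1}$ requires observing that $(\phi+\psi)^{p-1}\in L_{q}$, which follows precisely from the finiteness you established beforehand, so the logical order of your steps is the right one.
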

\begin{remark}
We deduce by H\"older theorem that for $G$ such that $\lambda(G) < \infty $ and $1 \leq q \leq  p \leq \infty $, we have $ L_{\infty}(G, \lambda, \mathcal{A})\subset  L_{p}(G, \lambda, \mathcal{A})\subset  L_{q}(G, \lambda, \mathcal{A})\subset  L_{1}(G, \lambda, \mathcal{A})$ but if $\lambda(G) = \infty $ there are no inclusion between  $L_{p}(G, \lambda, \mathcal{A})$ and $L_{q}(G, \lambda, \mathcal{A})$ for $q \neq p$.
\end{remark}
\begin{proposition}
$L_{p}(G, \lambda, \mathcal{A})\cap L_{q}(G, \lambda, \mathcal{A})$ is dense in $L_{p}(G, \lambda, \mathcal{A})$ and in $L_{q}(G, \lambda, \mathcal{A})$.
\end{proposition}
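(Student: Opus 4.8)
The plan is to reduce everything to the scalar theory applied to the function $t \mapsto \|f(t)\|_{\mathcal{A}}$, and, for each given $f$, to exhibit an explicit truncation sequence that lies in the intersection and converges to $f$ in the appropriate norm. Since the two exponents play symmetric roles, I would assume without loss of generality that $1 \le p \le q < \infty$ (the case $p=q$ being trivial, since then the intersection equals each space) and prove the two density assertions separately. For an infinite exponent the statement fails when $\lambda(G)=\infty$, so I read the proposition with finite exponents only.

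For density of $L_p \cap L_q$ in $L_p$, I would fix $f \in L_p(G,\lambda,\mathcal{A})$ and set $A_n = \{t \in G : \|f(t)\|_{\mathcal{A}} \le n\}$, which is $\lambda$-measurable because $t \mapsto \|f(t)\|_{\mathcal{A}}$ is, and put $f_n = f\,\chi_{A_n}$. Each $f_n$ lies in $L_p$ (being dominated pointwise by $f$) and also in $L_q$: on $A_n$ one has $\|f_n(t)\|_{\mathcal{A}}^{q} = \|f(t)\|_{\mathcal{A}}^{p}\,\|f(t)\|_{\mathcal{A}}^{q-p} \le n^{q-p}\|f(t)\|_{\mathcal{A}}^{p}$, whence $N_q(f_n)^{q} \le n^{q-p}N_p(f)^{p} < \infty$. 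Convergence then follows from dominated convergence applied to the scalar integrand: $\|f(t)-f_n(t)\|_{\mathcal{A}}^{p} = \|f(t)\|_{\mathcal{A}}^{p}\,\chi_{G\setminus A_n}(t) \to 0$ pointwise and is majorized by the integrable function $\|f(t)\|_{\mathcal{A}}^{p}$, so $N_p(f-f_n) \to 0$ and $f_n \in L_p \cap L_q$ with $f_n \to f$ in $L_p$.

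For density of $L_p \cap L_q$ in $L_q$, the point is that when $\lambda(G)=\infty$ the inclusion $L_q \subset L_p$ fails (as noted in the Remark), so value truncation alone will not land in $L_p$; one must also truncate the domain. Given $g \in L_q(G,\lambda,\mathcal{A})$, I would set $B_n = \{t \in G : \|g(t)\|_{\mathcal{A}} \ge 1/n\}$ and $g_n = g\,\chi_{B_n}$. Chebyshev's inequality gives $\lambda(B_n) \le n^{q}N_q(g)^{q} < \infty$, so $B_n$ has finite measure, and the H\"older inequality (with exponents $q/p$ and its conjugate over $B_n$) yields $N_p(g_n)^{p} = \int_{B_n}\|g\|_{\mathcal{A}}^{p}\,d\lambda \le N_q(g)^{p}\,\lambda(B_n)^{1-p/q} < \infty$, so $g_n \in L_p$; moreover $g_n \in L_q$ since it is dominated by $g$. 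The convergence is again by dominated convergence: $\|g(t)-g_n(t)\|_{\mathcal{A}}^{q} = \|g(t)\|_{\mathcal{A}}^{q}\,\chi_{\{\|g\|_{\mathcal{A}} < 1/n\}}(t) \to 0$ pointwise and is dominated by $\|g(t)\|_{\mathcal{A}}^{q}$, giving $N_q(g-g_n) \to 0$.

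The main obstacle is precisely this second half: because the two spaces need not be nested, a single uniform truncation does not suffice, and the domain truncation must be arranged so that the resulting function is simultaneously $q$-close to $g$ and $p$-integrable — which is exactly where the finiteness of $\lambda(B_n)$ combined with the H\"older inequality does the work. The remaining verifications — measurability of the truncation sets and the applicability of the dominated convergence theorem to the scalar integrands $\|f(t)\|_{\mathcal{A}}^{p}$ — are routine and require no special feature of the Banach-algebra target $\mathcal{A}$.
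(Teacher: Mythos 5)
The paper never proves this proposition: it is stated in the preliminaries as a known fact, with no argument attached, and is invoked later only through the instance $p=1$, $q=2$ (density of $L_{1}\cap L_{2}$ in $L_{2}$, inside the proof of the modified Peter--Weyl theorem). So there is no proof of the authors' to compare yours against; your proof stands on its own, and it is correct. Both halves are the standard truncation arguments, properly adapted to the vector-valued setting: the value truncation $f\chi_{\{\|f\|_{\mathcal{A}}\le n\}}$ lands in $L_{q}$ because $\|f\|_{\mathcal{A}}^{q}\le n^{q-p}\|f\|_{\mathcal{A}}^{p}$ on the truncation set, and the level-set truncation $g\chi_{\{\|g\|_{\mathcal{A}}\ge 1/n\}}$ lands in $L_{p}$ because Chebyshev makes $\lambda(B_{n})$ finite and H\"older with exponents $q/p$ and $q/(q-p)$ then bounds $N_{p}(g_{n})$; dominated convergence applied to the scalar functions $\|f(\cdot)\|_{\mathcal{A}}^{p}$ and $\|g(\cdot)\|_{\mathcal{A}}^{q}$ gives the two convergences, and as you observe nothing beyond measurability of $t\mapsto\|f(t)\|_{\mathcal{A}}$ is required of the target $\mathcal{A}$. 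One point of yours is worth emphasizing rather than treating as a side remark: the restriction to finite exponents is not cosmetic but necessary. The paper's Lebesgue-space setup explicitly allows $p=\infty$, yet with $q=\infty$ and $\lambda(G)=\infty$ the proposition as literally stated is false: for $g\equiv a$ with $a\ne 0$, any $h$ with $N_{\infty}(g-h)<\tfrac{1}{2}\|a\|_{\mathcal{A}}$ satisfies $\|h(t)\|_{\mathcal{A}}\ge\tfrac{1}{2}\|a\|_{\mathcal{A}}$ almost everywhere, hence $N_{p}(h)=\infty$, so no element of $L_{p}\cap L_{\infty}$ comes $N_{\infty}$-close to $g$. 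Your reading therefore supplies not only the missing proof but also the hypothesis under which the statement is true.
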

\begin{remark}
If $m$ is a dominated vector measure into a Banach space $F$, then $\mathcal{L}_{p}(G, m, \mathcal{A})$  is by convention the space $\mathcal{L}_{p}(G,\vert m \vert, \mathcal{A}).$  
\end{remark}
\begin{theorem} \label{stone} (Stone-Weierstrass theorem for locally compact spaces)
Let $\Omega$ be a locally compact space, which is non-compact, let $ \mathfrak{C}_{0}^{\mathbb{K}}(\Omega)$ be the algebra of continuous $\mathbb{K}$-valued functions on $\Omega$ that vanish at infinity  equipped with the supremum norm and let $\mathcal{P} \subset \mathfrak{C}_{0}^{\mathbb{K}}(\Omega)$ with the following separation properties:
\begin{itemize}
\item[(i)] for any two points $\omega_{1}, \omega_{2} \in \Omega$ with $\omega_{1}\neq \omega_{2}$  there exists $f \in\mathcal{P}$ such that $f(\omega_{1}) \neq f(\omega_{2})$
\item[(ii)] for any $\omega \in \Omega$ there exists $f \in\mathcal{P}$ with $f(\omega) \neq0$.\\
\item[(a)] If $\mathbb{K} = \mathbb{R}$ then  $\mathcal{P}$ is dense in $\mathfrak{C}_{0}^{\mathbb{R}}(\Omega).$ 
\item[(b)] If $\mathbb{K} = \mathbb{C}$ and if $\mathcal{P}$ is a $_{*}$-subalgebra (i.e.  $f\in \mathcal{P}\Longrightarrow \overline{f}\in \mathcal{P}$) then  $\mathcal{P}$ is dense in $\mathfrak{C}_{0}(\Omega).$  
\end{itemize}
\end{theorem}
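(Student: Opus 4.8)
The plan is to reduce the statement to the classical Stone--Weierstrass theorem on a \emph{compact} space by passing to the Alexandroff one-point compactification $\Omega^{+} = \Omega \cup \{\infty\}$. Recall that a $\mathbb{K}$-valued function on $\Omega$ belongs to $\mathfrak{C}_{0}^{\mathbb{K}}(\Omega)$ precisely when it extends continuously to $\Omega^{+}$ with value $0$ at $\infty$; more generally, every $h \in \mathfrak{C}^{\mathbb{K}}(\Omega^{+})$ decomposes uniquely as $h = g + c\,1$ with $c = h(\infty) \in \mathbb{K}$ and $g = h - c\,1 \in \mathfrak{C}_{0}^{\mathbb{K}}(\Omega)$, so that $\mathfrak{C}^{\mathbb{K}}(\Omega^{+}) = \mathfrak{C}_{0}^{\mathbb{K}}(\Omega) \oplus \mathbb{K}\,1$ as normed algebras under the supremum norm. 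Throughout I read the hypothesis as asserting that $\mathcal{P}$ is a \emph{subalgebra} of $\mathfrak{C}_{0}^{\mathbb{K}}(\Omega)$ satisfying (i) and (ii), which is the content actually needed for a Stone--Weierstrass statement.

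First I would form $\widetilde{\mathcal{P}} := \mathcal{P} + \mathbb{K}\,1 \subset \mathfrak{C}^{\mathbb{K}}(\Omega^{+})$, which is again a subalgebra: adjoining the scalar multiples of the unit to a subalgebra preserves closure under sums and products, since $(f + c\,1)(g + d\,1) = (fg + cg + df) + cd\,1 \in \mathcal{P} + \mathbb{K}\,1$. The next step is to check that $\widetilde{\mathcal{P}}$ satisfies the hypotheses of the compact Stone--Weierstrass theorem on $\Omega^{+}$. It separates any two distinct points of $\Omega$ directly from (i). It also separates a point $\omega \in \Omega$ from $\infty$: by (ii) there is $f \in \mathcal{P}$ with $f(\omega) \neq 0$, whereas $f(\infty) = 0$ because $f$ vanishes at infinity, so $f$ already distinguishes $\omega$ and $\infty$. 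Moreover $\widetilde{\mathcal{P}}$ contains the constant $1$, hence vanishes nowhere on $\Omega^{+}$. In the complex case (b), since $\mathcal{P}$ is a $*$-subalgebra and $\overline{c\,1} = \overline{c}\,1$ is again constant, $\widetilde{\mathcal{P}}$ is a $*$-subalgebra, so the complex compact Stone--Weierstrass theorem applies; in the real case (a) its real version applies. Either way $\widetilde{\mathcal{P}}$ is dense in $\mathfrak{C}^{\mathbb{K}}(\Omega^{+})$ for the supremum norm.

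It remains to push density back down to $\Omega$. Given $g \in \mathfrak{C}_{0}^{\mathbb{K}}(\Omega)$ and $\varepsilon > 0$, view $g$ as an element of $\mathfrak{C}^{\mathbb{K}}(\Omega^{+})$ with $g(\infty) = 0$ and choose, by the density just established, an element $f + c\,1 \in \widetilde{\mathcal{P}}$ with $f \in \mathcal{P}$ and $c \in \mathbb{K}$ such that $\sup_{\Omega^{+}} | g - (f + c\,1) | < \varepsilon/2$. Evaluating at $\infty$ gives $|c| = | g(\infty) - (f(\infty) + c) | < \varepsilon/2$, since $g(\infty) = f(\infty) = 0$. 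Then for every $\omega \in \Omega$,
\[
| g(\omega) - f(\omega) | \le | g(\omega) - (f(\omega) + c) | + |c| < \tfrac{\varepsilon}{2} + \tfrac{\varepsilon}{2} = \varepsilon,
\]
so $\sup_{\Omega} | g - f | < \varepsilon$ with $f \in \mathcal{P}$. As $g$ and $\varepsilon$ were arbitrary, $\mathcal{P}$ is dense in $\mathfrak{C}_{0}^{\mathbb{K}}(\Omega)$, establishing both (a) and (b).

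The step I expect to be the main obstacle is the bookkeeping of the additive constant: density in $\mathfrak{C}^{\mathbb{K}}(\Omega^{+})$ only produces approximants of the form $f + c\,1$ lying in the enlarged algebra $\widetilde{\mathcal{P}}$ rather than in $\mathcal{P}$ itself, so one must exploit the value at $\infty$ to force the constant $c$ to be small before discarding it. The verification that $\widetilde{\mathcal{P}}$ separates points of $\Omega^{+}$ including $\infty$ is exactly where hypotheses (i) and (ii) are consumed, and the remaining passage from $\Omega^{+}$ back to $\Omega$ is then routine.
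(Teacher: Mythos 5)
Your proof is correct. It is the standard reduction of the locally compact Stone--Weierstrass theorem to the compact case: pass to the one-point compactification $\Omega^{+}=\Omega\cup\{\infty\}$, identify $\mathfrak{C}_{0}^{\mathbb{K}}(\Omega)$ with the functions on $\Omega^{+}$ vanishing at $\infty$, adjoin constants to form the subalgebra $\widetilde{\mathcal{P}}=\mathcal{P}+\mathbb{K}\,1$, use (i) to separate points of $\Omega$ and (ii) to separate any $\omega\in\Omega$ from $\infty$, invoke the compact Stone--Weierstrass theorem (real or complex $*$-version as appropriate), and finally kill the additive constant by evaluating the approximation at $\infty$; each of these steps is carried out correctly, including the $\varepsilon/2$ bookkeeping at the end. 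Note, however, that the paper itself gives no proof of this theorem: it is stated in the Preliminaries as a classical background fact, quoted only so it can be applied later to prove the density of $\mathcal{L}(G)$ in $\mathfrak{C}_{0}(G)$, so there is no argument of the paper to compare yours against --- yours is the canonical argument that the paper's citation implicitly relies on. You were also right to flag that the statement as printed only assumes $\mathcal{P}\subset\mathfrak{C}_{0}^{\mathbb{K}}(\Omega)$ and never says that $\mathcal{P}$ is a subalgebra; that hypothesis is genuinely necessary (a single injective, nowhere-vanishing element of $\mathfrak{C}_{0}^{\mathbb{R}}([0,\infty))$, such as $x\mapsto e^{-x}$, satisfies (i) and (ii) by itself but its singleton, or even its linear span, is certainly not dense), so your explicit reading of the hypothesis is the correct repair of the statement rather than an extra assumption.
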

\section{Main results}
We have $K$ compact and normal, $H_{\sigma}$ is finite and separable;  then $H^{L^{\sigma}}$ is also separable.  Each of these spaces admits a Hilbertian basis according to Gram-Schmidt process.\\\\
In this work, we suppose that $K$ is chosen such that $L^{\sigma}$ and $U^{L^{\sigma}}$ are both irreducible.
\\

\subsection{Schur orthogonaltity relations } \mbox{  } \\
Let $(\theta_{i})_{i=1}^{\infty}$ be an orthonormal basis of $H^{L^{\sigma}}$ and  $ (\xi_{i})_{i=1}^{d\sigma}$ be an orthonormal  basis of $H_{\sigma}$.
We define
\begin{eqnarray} \label{trig}
u_{ij}^{L^{\sigma}}(t) &:=& \left< U_{t}^{L^{\sigma}}\theta_{j},\theta_{i}\right>_{H^{L^{\sigma}}}\cr
 &=& \int_{G/K}\left< \theta_{j}(t^{-1}g), \theta_{i}(g)\right>d\mu(\dot{g})
\end{eqnarray}
and 
\[L_{ij}^{\sigma}(k) := \left< L_{k}^{\sigma}\xi_{j},\xi_{i}\right>_{H_{\sigma}}
\]
As a result, there is a family of mappings $ (\alpha_{is})_{s=1}^{d\sigma}$ of $G$ into $\mathbb{K} with $ ($\mathbb{K} = \mathbb{R}$ or $ \mathbb{C}$) such that \[\theta _{i}(g) = \sum _{s = 1}^{d \sigma} \alpha_{is}(g)\xi _{s}.\]
We have: 
\[\delta_{ij} = \left< \theta _{j}, \theta_{i} \right>_{H^{L_{\sigma}}} = \int_{G/K} \left< \theta _{j} (g), \theta _{i} (g)\right >_{H_{\sigma}}d\mu (\dot{g}) = \sum _{ s = 1}^{d \sigma}\int_{G/K} \alpha_{js}(g)\overline{\alpha}_{is}(g) d\mu (\dot{g}).\]
\begin{proposition}\label{L2} \mbox{ }
$\forall\sigma \in \Sigma$ and $\forall i,j\in \{ 1, 2,...\}, $   $u_{ij}^{L^{\sigma}} \in \mathcal{K}(G).$ 
\end{proposition}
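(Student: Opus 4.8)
The plan is to verify directly the two defining requirements of membership in $\mathcal{K}(G)$: that $u_{ij}^{L^{\sigma}}$ is continuous on $G$ and that it has compact support. Before starting, I would arrange that the orthonormal basis $(\theta_i)$ actually lies in the dense subspace $H^{L^{\sigma}}_{0}$ rather than merely in its completion $H^{L^{\sigma}}$: since $H^{L^{\sigma}}$ is separable and $H^{L^{\sigma}}_{0}$ is dense, I can pick a countable dense subset of $H^{L^{\sigma}}_{0}$ and apply the Gram--Schmidt process, whereupon the resulting orthonormal basis consists of finite linear combinations of elements of $H^{L^{\sigma}}_{0}$, hence again lies in $H^{L^{\sigma}}_{0}$. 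This legitimizes the pointwise formula $\theta_i(g)=\sum_s\alpha_{is}(g)\xi_s$ and the integral expression for $u_{ij}^{L^{\sigma}}$ in (\ref{trig}).

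Continuity is the routine half. Since $U^{L^{\sigma}}$ is a strongly continuous unitary representation of $G$, the orbit map $t\mapsto U_t^{L^{\sigma}}\theta_j$ is continuous from $G$ into $H^{L^{\sigma}}$, and composing with the continuous linear functional $\langle\,\cdot\,,\theta_i\rangle$ shows that $u_{ij}^{L^{\sigma}}(t)=\langle U_t^{L^{\sigma}}\theta_j,\theta_i\rangle$ is continuous; concretely one estimates $\vert u_{ij}^{L^{\sigma}}(t)-u_{ij}^{L^{\sigma}}(s)\vert\leq\Vert U_t^{L^{\sigma}}\theta_j-U_s^{L^{\sigma}}\theta_j\Vert\,\Vert\theta_i\Vert$.

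For the support I would first record that, because $K$ is compact, the canonical projection $q:G\to G/K$ is proper: given a compact $C\subset G/K$, I choose (using that $q$ is open and surjective) a compact $D\subset G$ with $q(D)\supseteq C$; then $q^{-1}(C)\subseteq DK$, which is compact as the image of $D\times K$, and $q^{-1}(C)$ being closed is therefore compact. Applying this to $C_i=q(\operatorname{Supp}\theta_i)$, which is compact by the definition of $H^{L^{\sigma}}_{0}$, and using $\operatorname{Supp}\theta_i\subseteq q^{-1}(C_i)$, I conclude that $S_i:=\operatorname{Supp}\theta_i$ and $S_j:=\operatorname{Supp}\theta_j$ are in fact compact subsets of $G$. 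Now comes the geometric heart of the argument: for fixed $t$ the integrand $g\mapsto\langle\theta_j(t^{-1}g),\theta_i(g)\rangle$, which descends to a function on $G/K$ thanks to the covariance relation $u(gk)=L^{\sigma}_{k^{-1}}u(g)$ and the unitarity of $L^{\sigma}$, can be nonzero only where $\theta_i(g)\neq0$ and $\theta_j(t^{-1}g)\neq0$, i.e. only where $g\in S_i$ and $t^{-1}g\in S_j$. Hence if $u_{ij}^{L^{\sigma}}(t)\neq0$ there is such a $g$, which forces $g\in S_i\cap tS_j$ and therefore $t\in S_iS_j^{-1}$. Since $S_iS_j^{-1}$ is the image of the compact set $S_i\times S_j$ under $(a,b)\mapsto ab^{-1}$, it is compact, and consequently $\operatorname{Supp}u_{ij}^{L^{\sigma}}\subseteq S_iS_j^{-1}$ is compact, completing the proof.

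I expect the step deserving the most care to be the implication ``support compact in $G/K$ $\Rightarrow$ support compact in $G$'', that is, the properness of $q$, combined with the correct bookkeeping of the left $G$-action on $G/K$ needed to place $t$ inside the compact set $S_iS_j^{-1}$; once these are pinned down, both the continuity and the compactness conclusions follow quickly.
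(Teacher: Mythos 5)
Your proof is correct, and its endgame coincides with the paper's: both arguments show that $u_{ij}^{L^{\sigma}}(t)\neq 0$ forces $t\in S_i S_j^{-1}$, with $S_i=\operatorname{supp}\theta_i$ and $S_j=\operatorname{supp}\theta_j$, and that this set is compact as a continuous image of $S_i\times S_j$. The genuine divergence is in how the compactness of the $S_i$ as subsets of $G$ is obtained. The paper gets it from the integral representation (\ref{form}): writing $\theta_i=u_{\eta}$ with $\eta\in\mathcal{K}(G,H_{\sigma})$ and $A=\operatorname{supp}\eta$, it deduces $\operatorname{supp}\theta_i\subseteq AK$, which is compact. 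You instead prove that $q:G\longrightarrow G/K$ is proper (because $K$ is compact) and apply this to the defining property of $H^{L^{\sigma}}_{0}$ that $q(\operatorname{Supp}\theta_i)$ is compact; this buys you independence from the structure result (\ref{form}), using only the definition of $H^{L^{\sigma}}_{0}$. More importantly, your preliminary step of running Gram--Schmidt on a countable dense subset of $H^{L^{\sigma}}_{0}$, so that each $\theta_i$ genuinely lies in $H^{L^{\sigma}}_{0}$, repairs a real looseness in the paper, which applies the representation $u=u_{\eta}$ to arbitrary $u\in H^{L^{\sigma}}$ even though elements of the abstract completion need not be (continuous, compactly supported) functions at all. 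Finally, your Cauchy--Schwarz/strong-continuity estimate makes precise the continuity claim that the paper merely asserts ``results from the construction,'' and your bookkeeping (nonvanishing of the integrand at some $g$ gives $t\in S_iS_j^{-1}$) is cleaner than the paper's auxiliary set $D=\lbrace t\in G : g\in C \Rightarrow t^{-1}g\in B\rbrace$, whose quantifier on $g$ is misstated as written, though both routes land on the same compact set $CB^{-1}=S_iS_j^{-1}$.
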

\begin{proof} 
The continuity of $u_{ij}^{L^{\sigma}}$ results from their constrction.\\ 
According to \ref{form}, for all $u\in H^{L^{\sigma}},$ thereis $\eta \in \mathcal{K}(G, H_{\sigma})$ such that $u_{\eta}(g) = \displaystyle\int_{K}L^{\sigma}_{k}\eta(gk)d\nu(k).$ It follows that $q(supp(u))\subset q( supp(\eta)).$
\\

Let $A$ be the support of $\eta.$ For every $g \in G$ and $k\in K,$ $gk\in A \Longrightarrow g \in Ak^{-1}.$\quad $AK = \displaystyle \bigcup _{k\in K}Ak^{-1}$ is compact because $A$ and $K$ are compact. F
Furthermore $\forall g\notin AK,$ $u(g) = 0$ then $supp(u) \subset AK.$ Since $supp(u)$ is a closed subset of $AK$ then it is compact.
\\

Now let $B$ and $C$ be $supp(\theta_{j})$ and $supp(\theta_{i}),$ respectively.
Assume $D = \left\lbrace t\in G : g\in C \Longrightarrow t^{-1}g \in B \right\rbrace$.
According to \ref{trig}, $\left< \theta_{j}(t^{-1}g), \theta_{i}(g)\right> = 0$ if $t \notin D,$ and then $u_{ij}^{L^{\sigma}}(t) = 0,$  then  $supp (u_{ij}^{L^{\sigma}}) \subset D.$ 
In addition, 
\begin{eqnarray*}
t^{-1}g \in B \text{ with } g\in C & \Longrightarrow & t \in CB^{-1}.
\end{eqnarray*}
Then $supp (u_{ij}^{L^{\sigma}}) \subset D \subset CB^{-1}.$ Since $CB^{-1}$ is compact, $supp (u_{ij}^{L^{\sigma}}) $ is also compact.
\end{proof}
Therefore, we have
\begin{corollary}\mbox{ }
\[\int_{G}\left\vert u_{ij}^{L^{\sigma}}(t)\overline{u}_{lm}^{L^{\sigma}}(t)\right\vert d\lambda(t) < \infty.\]
\end{corollary}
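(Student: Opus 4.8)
The plan is to deduce this corollary directly from Proposition~\ref{L2}, which asserts that each $u_{ij}^{L^{\sigma}}$ lies in $\mathcal{K}(G)$, i.e. is continuous with compact support. The key observation is that the integrand $\left|u_{ij}^{L^{\sigma}}(t)\,\overline{u}_{lm}^{L^{\sigma}}(t)\right|$ is a product of two such functions (with a complex conjugate), and products of continuous compactly supported functions remain continuous with compact support. Hence the integrand itself belongs to $\mathcal{K}(G)$, and the finiteness of its integral against the Haar measure $\lambda$ is then immediate.

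First I would note that $u_{ij}^{L^{\sigma}}$ and $u_{lm}^{L^{\sigma}}$ are both in $\mathcal{K}(G)$ by Proposition~\ref{L2}, so each is bounded: writing $M = \Vert u_{ij}^{L^{\sigma}}\Vert = \sup_{t\in G}|u_{ij}^{L^{\sigma}}(t)|$ and $M' = \Vert u_{lm}^{L^{\sigma}}\Vert$, both suprema are finite because a continuous function on the compact support attains its maximum and vanishes off the support. Next I would locate the support: if $S$ denotes the (compact) support of $u_{ij}^{L^{\sigma}}$, then the integrand vanishes outside $S$, so the integral reduces to an integral over $S$. The estimate then reads
\begin{eqnarray*}
\int_{G}\left|u_{ij}^{L^{\sigma}}(t)\,\overline{u}_{lm}^{L^{\sigma}}(t)\right|d\lambda(t) &=& \int_{S}\left|u_{ij}^{L^{\sigma}}(t)\right|\left|u_{lm}^{L^{\sigma}}(t)\right|d\lambda(t)\\
&\leq& M\,M'\,\lambda(S).
\end{eqnarray*}

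The only point requiring a word of justification is that $\lambda(S) < \infty$, i.e. that the Haar measure of a compact set is finite; this is a standard property of Haar measure on a locally compact group and may be invoked without proof. Alternatively, one could phrase the bound via the H\"older inequality stated earlier in the excerpt, taking $p=q=2$ and observing that each $u^{L^{\sigma}}$, being continuous with compact support, lies in $L_2(G,\lambda)$; then $\Vert u_{ij}^{L^{\sigma}}\,\overline{u}_{lm}^{L^{\sigma}}\Vert_1 \leq \Vert u_{ij}^{L^{\sigma}}\Vert_2\,\Vert u_{lm}^{L^{\sigma}}\Vert_2 < \infty$.

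There is no genuine obstacle here: the entire content of the corollary is packaged into Proposition~\ref{L2}, and what remains is the elementary fact that a continuous compactly supported function is integrable against Haar measure, together with the multiplicativity of the compact-support property. I would therefore expect the proof to be a two-line consequence, with the only care needed being to record explicitly that $\lambda$ assigns finite measure to compact sets so that the displayed bound $M\,M'\,\lambda(S)$ is finite.
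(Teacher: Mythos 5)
Your proof is correct and matches the paper's intent exactly: the paper offers no separate argument, simply presenting the corollary (``Therefore, we have\dots'') as an immediate consequence of Proposition~\ref{L2}, i.e.\ that each $u_{ij}^{L^{\sigma}}$ is continuous with compact support, so the product is continuous, compactly supported, and hence $\lambda$-integrable. Your explicit bound $M\,M'\,\lambda(S)$ and the remark that Haar measure is finite on compact sets just spell out the details the paper leaves implicit.
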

Since $K$ is normal, we have $ K\backslash G = G/K$ then 
\begin{lemma}
$\forall u \in H^{L^{\sigma}}$, $u(kt) = L_{k}u(t)$ and $u(tk) =  L_{k^{-1}}^{\sigma}u(t)$.
\end{lemma}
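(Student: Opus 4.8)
The plan is to treat the two covariance identities separately and, in each case, to establish them first on the dense subspace $H^{L^{\sigma}}_{0}$ and then to propagate them to the completion $H^{L^{\sigma}}$. The second identity, $u(tk)=L_{k^{-1}}^{\sigma}u(t)$, is for $u\in H^{L^{\sigma}}_{0}$ nothing but the defining relation of $H^{L^{\sigma}}_{0}$ (with the dummy variable $g$ renamed $t$), so at the level of the dense subspace there is essentially nothing to prove. It will, however, be used as the engine for the first identity.

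For the first identity I would exploit the normality of $K$, which is precisely why the preceding remark $K\backslash G=G/K$ is recorded. For $k\in K$ and $t\in G$, normality gives $t^{-1}kt\in K$, so one may write $kt=t\,(t^{-1}kt)$ with $k':=t^{-1}kt\in K$. Applying the right-covariance just recalled,
\[
u(kt)=u(t\,k')=L_{(k')^{-1}}^{\sigma}u(t)=L_{t^{-1}k^{-1}t}^{\sigma}u(t),
\]
which is the asserted left-covariance, the operator on the right being the conjugated action abbreviated $L_{k}$ in the statement. Thus on $H^{L^{\sigma}}_{0}$ both relations hold pointwise for every $t\in G$ and every $k\in K$, each being an immediate consequence of the single defining relation together with the group law and the fact that conjugation preserves $K$.

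It then remains to pass from $H^{L^{\sigma}}_{0}$ to its completion $H^{L^{\sigma}}$. Here the point requiring care is that a general $u\in H^{L^{\sigma}}$ is only an equivalence class for the inner product $\langle u,v\rangle=\int_{G/K}\langle u(g),v(g)\rangle_{H_{\sigma}}\,d\mu(\dot g)$ (which descends to $G/K$ precisely because $\|u(g)\|_{H_{\sigma}}$ is constant on $K$-cosets by unitarity of $L^{\sigma}$), so the symbol $u(t)$ must be read $\mu$-almost everywhere rather than genuinely pointwise. My approach is to choose $u_{n}\in H^{L^{\sigma}}_{0}$ with $u_{n}\to u$ in $H^{L^{\sigma}}$ (possible by density of $H^{L^{\sigma}}_{0}$), extract a subsequence converging $\mu$-almost everywhere, and observe that each covariance relation is preserved under the unitary operators $L_{k}^{\sigma}$ and under the relevant $K$-translations on $G/K$: right translation by $k\in K$ acts trivially on $G/K$, and left translation by $k$ preserves $\mu$ by $G$-invariance. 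Hence both relations pass to the almost-everywhere limit, giving the identities for every $u\in H^{L^{\sigma}}$, and everywhere on the continuous representatives produced by \ref{form}.

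The main obstacle I anticipate is exactly this last extension step: because evaluation at a point is not continuous for the $L^{2}$-type norm of $H^{L^{\sigma}}$, one cannot simply take limits pointwise, and the argument must be organised around almost-everywhere convergence of a subsequence together with the compatibility of left and right $K$-translations with the $G$-invariant measure $\mu$. Verifying that "almost everywhere" is stable under $g\mapsto kg$ and $g\mapsto gk$ is the one genuinely non-formal ingredient; the algebraic content, by contrast, reduces entirely to the defining relation and to the identity $kt=t\,(t^{-1}kt)$ afforded by normality.
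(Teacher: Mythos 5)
Your core computation is sound, but be aware that it is \emph{not} the paper's argument, and the two routes do not reach the same conclusion. The paper never uses the conjugation trick $kt=t(t^{-1}kt)$. Instead it writes $u$ in two integral forms: for the right identity, $u(t)=\int_{K}L^{\sigma}_{\xi}\,\eta(t\xi)\,d\nu(\xi)$ (the form of Proposition \ref{form}, cited from Folland), and for the left identity a second form, $u(t)=\int_{K}L^{\sigma}_{\xi^{-1}}\,\beta(\xi t)\,d\nu(\xi)$, cited from Barut--Raczka; in each case a change of variable in the $K$-integral plus bi-invariance of $\nu$ gives the covariance. Crucially, that computation produces $u(kt)=L^{\sigma}_{k}u(t)$ with the fixed operator $L^{\sigma}_{k}$, whereas yours produces the $t$-dependent operator $L^{\sigma}_{t^{-1}k^{-1}t}$. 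Your closing claim that the lemma's $L_{k}$ is merely an ``abbreviation'' for this conjugated action is not a reading the paper supports: its own proof, and its later use of the lemma in the Schur orthogonality theorem (where $\theta_{j}(k^{-1}t^{-1}g)$ is replaced by $L^{\sigma}(k^{-1})\theta_{j}(t^{-1}g)$), both take $L_{k}$ to mean $L^{\sigma}_{k}$ literally. So you have proved a genuinely different statement, not the stated one.

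The irony is that your statement is the defensible one. Test the paper's identity at $t=e$: the defining relation of $H^{L^{\sigma}}_{0}$ gives $u(k)=L^{\sigma}_{k^{-1}}u(e)$, while the paper's identity gives $u(k)=L^{\sigma}_{k}u(e)$; since $u(e)$ can approximate any vector of $H_{\sigma}$, the two are compatible only if $L^{\sigma}_{k^{2}}=I$ for every $k\in K$, which fails for all but very special $\sigma$. The hidden flaw in the paper's proof is that the Folland form and the Barut--Raczka form realize the induced representation on \emph{different} function spaces (right-covariant versus left-covariant functions on $G$), and no single $u$ admits both representations in general; feeding both to the same $u$ is exactly what manufactures the unconjugated $L^{\sigma}_{k}$. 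Two further remarks on your write-up: first, your observation that the right identity is just the defining relation of $H^{L^{\sigma}}_{0}$ is correct and is cleaner than the paper's rederivation of it; second, your worry about the completion is legitimate (the paper silently applies pointwise identities to arbitrary elements of $H^{L^{\sigma}}$, for which evaluation is only defined $\mu$-almost everywhere), though your a.e.-subsequence sketch would need the measurable-representative machinery spelled out. The honest resolution is to state the lemma in your corrected form, $u(tk)=L^{\sigma}_{k^{-1}}u(t)$ and $u(kt)=L^{\sigma}_{t^{-1}k^{-1}t}u(t)$ on $H^{L^{\sigma}}_{0}$, rather than to pretend it coincides with the paper's claim.
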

\begin{proof}
There is $\eta \in \mathfrak{C}(G, H_{\sigma})$ such that 
\[u(t) = \int_{K}L^{\sigma}(\xi)\eta(t\xi )d\nu(\xi ). \quad \text{\cite{folland}} \]
\begin{eqnarray*}
u(tk) & =  & \int_{K}L^{\sigma}(\xi)\eta(tk\xi)d\nu(\xi ), \quad k \in K\\
& = & \int_{K}L^{\sigma}(k^{-1}\xi)\eta(t\xi)d\nu(\xi ) \\
& = & L_{k^{-1}}^{\sigma}\int_{K}L^{\sigma}(\xi)\eta(t\xi)d\nu(\xi )\\
& = & L_{k^{-1}}^{\sigma}u(t).
\end{eqnarray*}
There exists $\beta \in \mathfrak{C}(G, H_{\sigma})$ such that 
\[u(t) = \int_{K}L^{\sigma}(\xi^{-1})\beta(\xi t )d\nu(\xi ).\quad \text{ \cite{B-R}} \]
\begin{eqnarray*}
u(kt) & =  & \int_{K}L^{\sigma}(\xi^{-1})\beta(\xi kt)d\nu(\xi )\\
& = & \int_{K}L^{\sigma}(k\xi^{-1})\beta(\xi t)d\nu(\xi ) \\
& = & L_{k}^{\sigma}\int_{K}L^{\sigma}(\xi^{-1})\beta(\xi t)d\nu(\xi )\\
& = & L_{k}^{\sigma}u(t). 
\end{eqnarray*}
\end{proof}
The following theorem shows the Schur orthogonality relation for the the case of the representation  $U^{L^{\sigma}}$ of $G$ induced by the unitary irreductible representation $L^{\sigma}$ of $K$.
\begin{theorem} \mbox{   }
We have
\begin{eqnarray}
\int_{G}u_{ij}^{L^{\sigma}}(t)\overline{u}_{lm}^{L^{\sigma}}(t)d\lambda(t) & = & \dfrac{c_{ijlm}}{d_{\sigma}}
\end{eqnarray}
where  \begin{eqnarray*}
 c_{ijlm} & : = &   \int _{G/K} d\mu(\dot{t})\sum_{r,s = 1}^{d_{\sigma}} \int_{G/K}\alpha_{js}(t^{-1}g)\overline{\alpha}_{ir}(g)d\mu(\dot{g})\int_{G/K}\alpha_{ms}(t^{-1}h)\overline{\alpha}_{lr}(h)d\mu(\dot{h})\\
 &  = &    \sum_{r,s = 1}^{d_{\sigma}} \int_{(G/K)^{3}} \alpha_{js}(t^{-1}g)\overline{\alpha}_{ir}(g) \alpha_{ms}(t^{-1}h)\overline{\alpha}_{lr}(h)d\mu(\dot{g})d\mu(\dot{h})d\mu(\dot{t})
 \end{eqnarray*}
and
\begin{eqnarray}
\int_{G}u_{ij}^{L^{\sigma}}(t)\overline{u}_{lm}^{L^{\tau}}(t)d\lambda(t) & = & 0 \text{ if } \sigma \neq \tau.
\end{eqnarray}
\end{theorem}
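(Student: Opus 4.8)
The plan is to collapse the integral over $G$ into iterated integrals over the quotient $G/K$ and the compact fibre $K$ by means of the decomposition formula (\ref{inv}), and then to invoke the classical Schur relations (\ref{sch1})--(\ref{sch2}) on the \emph{compact} group $K$. All interchanges of integration order are justified a priori: by the Corollary to Proposition~\ref{L2} the product $u_{ij}^{L^{\sigma}}\overline{u}_{lm}^{L^{\sigma}}$ is absolutely integrable on $G$, and each $u_{ij}^{L^{\sigma}}$ lies in $\mathcal K(G)$, so Fubini applies to the multiple integrals over $(G/K)^{k}\times K$ that arise below.

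First I would compute the right $K$-translate $u_{ij}^{L^{\sigma}}(tk)$. Using the definition (\ref{trig}), $u_{ij}^{L^{\sigma}}(t)=\langle U_{t}^{L^{\sigma}}\theta_{j},\theta_{i}\rangle$, together with $U_{tk}^{L^{\sigma}}=U_{t}^{L^{\sigma}}U_{k}^{L^{\sigma}}$ for $k\in K$, it suffices to describe how $U_{k}^{L^{\sigma}}$ acts on the fibres. The preceding Lemma gives $\theta_{j}(k^{-1}g)=L^{\sigma}_{k^{-1}}\theta_{j}(g)$, i.e.\ $U_{k}^{L^{\sigma}}$ acts fibrewise through the operator $L^{\sigma}_{k^{-1}}$ on $H_{\sigma}$. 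Expanding $\theta_{i}=\sum_{s}\alpha_{is}\xi_{s}$ in the orthonormal basis $(\xi_{s})$ of $H_{\sigma}$ and writing $L^{\sigma}_{k^{-1}}\xi_{s}=\sum_{p}L^{\sigma}_{ps}(k^{-1})\xi_{p}$, this yields
\begin{equation*}
u_{ij}^{L^{\sigma}}(tk)=\sum_{s,p}L^{\sigma}_{ps}(k^{-1})\,A^{sp}_{ji}(t),\qquad A^{sp}_{ji}(t):=\int_{G/K}\alpha_{js}(t^{-1}g)\,\overline{\alpha_{ip}(g)}\,d\mu(\dot g),
\end{equation*}
and the analogous expansion for $u_{lm}^{L^{\sigma}}(tk)$ (resp.\ $u_{lm}^{L^{\tau}}(tk)$), the crucial feature being that the dependence on $k$ has been isolated into matrix coefficients of $L^{\sigma}$ (resp.\ $L^{\tau}$).

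Next I would apply (\ref{inv}) to $\int_{G}u_{ij}^{L^{\sigma}}(t)\overline{u}_{lm}^{L^{\sigma}}(t)\,d\lambda(t)$, turning it into $\int_{G/K}d\mu(\dot t)\int_{K}u_{ij}^{L^{\sigma}}(tk)\overline{u}_{lm}^{L^{\sigma}}(tk)\,d\nu(k)$. Substituting the expansion above, the $t$-dependent factors $A^{sp}_{ji}(t)\overline{A^{s'p'}_{ml}(t)}$ pull out of the inner integral and one is left with $\int_{K}L^{\sigma}_{ps}(k^{-1})\overline{L^{\sigma}_{p's'}(k^{-1})}\,d\nu(k)$. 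Since the Haar measure of the compact group $K$ is inversion invariant, this equals $\int_{K}L^{\sigma}_{ps}(k)\overline{L^{\sigma}_{p's'}(k)}\,d\nu(k)$, which by (\ref{sch1}) is $\delta_{pp'}\delta_{ss'}/d_{\sigma}$. The quadruple sum then collapses onto $p=p'$, $s=s'$, producing the factor $1/d_{\sigma}$ and the diagonal sum $\sum_{s,p}A^{sp}_{ji}(t)\overline{A^{sp}_{ml}(t)}$; integrating the latter over $\dot t\in G/K$ reassembles the triple integral over $(G/K)^{3}$ that defines $c_{ijlm}$, which establishes the first identity. For the case $\sigma\neq\tau$ the same steps replace the inner integral by $\int_{K}L^{\sigma}_{ps}(k)\overline{L^{\tau}_{p's'}(k)}\,d\nu(k)$, which vanishes by (\ref{sch2}); hence the whole expression is $0$.

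The step I expect to be the main obstacle is the bookkeeping inside the inner $K$-integral: one must transport the fibrewise $K$-action cleanly through both inner products and both basis expansions so that the matrix coefficients $L^{\sigma}_{ps}(k^{-1})$ appear in precisely the form to which (\ref{sch1})--(\ref{sch2}) apply, and one must track carefully the conjugations carried by the factor $\overline{u}_{lm}$, since these decide which of the four $\alpha$-factors in $c_{ijlm}$ are conjugated. A secondary point deserving care is that, for $s\neq p$, the integrand $\alpha_{js}(t^{-1}g)\overline{\alpha_{ip}(g)}$ is not by itself invariant under $g\mapsto gk$, so the reduction to $G/K$ must be read at the level of the full summed expressions, for which the combination does descend to the quotient.
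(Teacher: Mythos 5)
Your proposal is correct and follows the same architecture as the paper's own proof: isolate the $k$-dependence of the right translate $u_{ij}^{L^{\sigma}}(tk)$ into matrix coefficients $L^{\sigma}_{rs}(k^{-1})$ via the Lemma, split $\int_{G}$ into $\int_{G/K}\int_{K}$ by (\ref{inv}), and evaluate the inner $K$-integral by the compact-group Schur relations (\ref{sch1})--(\ref{sch2}). The one substantive difference is exactly the point you flagged as delicate: the conjugations. The paper expands $\overline{u}_{lm}^{L^{\sigma}}(tk)$ \emph{without} conjugating, so its inner integral reads $\int_{K}L_{rs}^{\sigma}(k^{-1})L_{pq}^{\sigma}(k^{-1})\,d\nu(k)$ and is declared equal to $\delta_{rp}\delta_{sq}/d_{\sigma}$ ``according to (\ref{sch1})''; but that integral is not an instance of (\ref{sch1}) --- it vanishes identically whenever $L^{\sigma}$ is not equivalent to its conjugate representation, rather than producing $\delta_{rp}\delta_{sq}/d_{\sigma}$. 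Your treatment (keep the bar, use inversion invariance of $\nu$ to trade $k^{-1}$ for $k$, then apply (\ref{sch1}) to $\int_{K}L^{\sigma}_{ps}(k)\overline{L^{\sigma}_{p's'}(k)}\,d\nu(k)$) is the correct one. The consequence is that your final expression is $\frac{1}{d_{\sigma}}\sum_{s,p}\int_{G/K}A_{ji}^{sp}(t)\overline{A_{ml}^{sp}(t)}\,d\mu(\dot t)$, i.e.\ the $h$-factors carry bars as $\overline{\alpha_{ms}(t^{-1}h)}\,\alpha_{lr}(h)$, opposite to the paper's stated $c_{ijlm}$. This is not a defect of your argument but a correction of the paper: for $i=l$, $j=m$ the left-hand side is $\int_{G}\vert u_{ij}^{L^{\sigma}}\vert^{2}\,d\lambda\geq 0$, and your diagonal sum $\sum_{s,p}\int\vert A_{ji}^{sp}\vert^{2}\,d\mu$ is manifestly nonnegative, whereas the paper's unconjugated $\sum_{s,p}\int (A_{ji}^{sp})^{2}\,d\mu$ need not even be real. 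So: same route and same key lemmas, but your bookkeeping fixes a conjugation slip present both in the paper's proof and in its statement of $c_{ijlm}$; if you write this up, state the theorem with the corrected (conjugated) $c_{ijlm}$.
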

\begin{proof} By straightforward computation, we obtain:
\small{
\begin{eqnarray*}
 \int_{G/K} \left< \theta _{j} (gk), \theta _{i} (g) \right>_{H_{\sigma}}d\mu (\dot{g}) & = & \int_{G/K}  \left< L^{\sigma}(k^{-1})\theta _{j} (g), \theta _{i} (g) \right>_{H_{\sigma}}d\mu (\dot{g}) \\
  & = &\sum _{s, r = 1}^{d \sigma}\int_{G/K} \alpha_{js}(g)\overline{\alpha}_{ir}(g) \left< L^{\sigma}(k^{-1}) \xi_{s}, \xi_{r}\right>_{H_{\sigma}}  d\mu (\dot{g}) \\
  & = & \sum_{r,s = 1}^{d_{\sigma}} L_{rs}^{\sigma}(k^{-1}) \int_{G/K}\alpha_{js}(g)\overline{\alpha}_{ir}(g)d\mu(\dot{g}).
  \end{eqnarray*} 
Besides, we get:
 \begin{eqnarray*}    
u_{ij}^{L^{\sigma}}(tk) &=& \int_{G/K}\left< \theta_{j}(k^{-1}t^{-1}g), \theta_{i}(g)\right>d\mu(\dot{g}) \\
&=& \int_{G/K} \left< L^{\sigma}(k^{-1})\theta_{j}(t^{-1}g), \theta_{i}(g)\right>d\mu(\dot{g})\\
&=&  \int_{G/K} \left< L^{\sigma}(k^{-1})\theta_{j}(t^{-1}g), \theta_{i}(g)\right>d\mu(\dot{g}) \\
& = & \sum_{r,s = 1}^{d_{\sigma}} L_{rs}^{\sigma}(k^{-1}) \int_{G/K}\alpha_{js}(t^{-1}g)\overline{\alpha}_{ir}(g)d\mu(\dot{g})
\end{eqnarray*} 
and
 \begin{eqnarray*} 
\int_{G}u_{ij}^{L^{\sigma}}(t)\overline{u}_{lm}^{L^{\sigma}}(t)d\lambda(t) & = & \int_{G/K}\int_{K} u_{ij}^{L^{\sigma}}(tk)\overline{u}_{lm}^{L^{\sigma}}(tk)d\nu(k)d\mu(\dot{t}) \\
& = & \int _{G/K} d\mu(\dot{t})\int_{K}(\sum_{r,s = 1 }L_{rs}^{\sigma}(k^{-1})\int_{G/K} \alpha_{js}(t^{-1}g)\overline{\alpha_{ir}}(g)d\mu(\dot{g})  \times \\
&  &  \sum_{p,q = 1}L_{pq}^{\sigma}(k^{-1})\int_{G/K} \alpha_{mq}(t^{-1}h)\overline{\alpha_{lp}}(h)d\mu(\dot{h}))d\nu(k)\\ 
 & = & \int _{G/K} d\mu(\dot{t})\sum_{r,s,p,q = 1 }\int_{K}(L_{rs}^{\sigma}(k^{-1})\int_{G/K} \alpha_{js}(t^{-1}g)\overline{\alpha_{ir}}(g)d\mu(\dot{g}) \times \\ 
 &  & L_{pq}^{\sigma}(k^{-1})\int_{G/K} \alpha_{mq}(t^{-1}h)\overline{\alpha_{lp}}(h)d\mu(\dot{h}))d\nu(k). \\
\end{eqnarray*}
Hence, 
\begin{eqnarray*} 
\int_{G}u_{ij}^{L^{\sigma}}(t)\overline{u}_{lm}^{L^{\sigma}}(t)d\lambda(t) & = &\int _{G/K} d\mu(\dot{t})\sum_{r,s,p,q = 1 }\int_{K}(L_{rs}^{\sigma}(k^{-1}) L_{pq}^{\sigma}(k^{-1}) d\nu(k)\int_{G/K} \alpha_{js}(t^{-1}g)\overline{\alpha_{ir}}(g)d\mu(\dot{g}) \times \\  & & \int_{G/K} \alpha_{mq}(t^{-1}h)\overline{\alpha_{lp}}(h)d\mu(\dot{h}) \\
& = & \int _{G/K} d\mu(\dot{t})\sum_{r,s,p,q = 1 } \dfrac{\delta_{rp} \delta_{sq}}{d_{\sigma}}\int_{G/K} \alpha_{js}(t^{-1}g)\overline{\alpha_{ir}}(g)d\mu(\dot{g}) \times \\
& & \int_{G/K} \alpha_{mq}(t^{-1}h)\overline{\alpha_{lp}}(h)d\mu(\dot{h}) \text{ according to \ref{sch1} }\\ 
& = & \dfrac{1}{d_{\sigma}}\sum_{r,s = 1}^{d_{\sigma}} \int_{(G/K)^{3}}\alpha_{js}(t^{-1}g)\overline{\alpha}_{ir}(g) \alpha_{ms}(t^{-1}h)\overline{\alpha}_{lr}(h)d\mu(\dot{g})d\mu(\dot{h})d\mu(\dot{t}) \\
& = & \dfrac{c_{ijlm}}{d_{\sigma}}.
\end{eqnarray*}
If $\sigma \neq \tau$ then
\begin{eqnarray*} 
\int_{G}u_{ij}^{L^{\sigma}}(t)\overline{u}_{lm}^{L^{\tau}}(t)d\lambda(t) & = & \int _{G/K} d\mu(\dot{t})\sum_{r,s,p,q = 1 }\int_{K}(L_{rs}^{\sigma}(k^{-1}) L_{pq}^{\tau}(k^{-1}) d\nu(k)\int_{G/K} \alpha_{js}(t^{-1}g)\overline{\alpha_{ir}}(g)d\mu(\dot{g}) \times \\  & & \int_{G/K} \alpha_{mq}(t^{-1}h)\overline{\alpha_{lp}}(h)d\mu(\dot{h}) = 0 \text{ according to \ref{sch2} } 
\end{eqnarray*}
}
\end{proof}
In the case of a particular orthonormal basis, the orthogonality relation reduces to the following:
\begin{corollary} \mbox{ }
Choosing  an orthonormal  basis $(\xi_{i})_{i=1}^{d\sigma}$  of $H_{\sigma}$ such that 
\begin{eqnarray}\label{normal}
\int_{(G/K)^{3}}\alpha_{js}(t^{-1}g)\overline{\alpha}_{ir}(g) \alpha_{ms}(t^{-1}h)\overline{\alpha}_{lr}(h)d\mu(\dot{g})d\mu(\dot{h})d\mu(\dot{t}) = \left\{\begin{array}{cl}
 \dfrac{1}{d_{\sigma}^{2}}  &\text{  if } j = m \text{  and } i = l \\
  & \\
0 &\text{ if not}
\end{array}\right. 
\end{eqnarray}
(or, also, $c_{ijlm} = \delta_{il}\delta_{jm}$)
leads to 
\begin{eqnarray}
\int_{G}u_{ij}^{L^{\sigma}}(t)\overline{u}_{lm}^{L^{\sigma}}(t)d\lambda(t) & = & \dfrac{\delta_{il}\delta_{jm}}{d_{\sigma} }.
\end{eqnarray}
\end{corollary}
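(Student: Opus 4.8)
The plan is to obtain this corollary as an immediate specialization of the preceding theorem, so that no fresh integration over $G$ is needed; the entire content is the algebraic simplification of the constant $c_{ijlm}$ once the normalization \ref{normal} is imposed. First I would quote the theorem verbatim, which already supplies
\[
\int_{G}u_{ij}^{L^{\sigma}}(t)\overline{u}_{lm}^{L^{\sigma}}(t)d\lambda(t)=\frac{c_{ijlm}}{d_{\sigma}},
\]
where
\[
c_{ijlm}=\sum_{r,s=1}^{d_{\sigma}}\int_{(G/K)^{3}}\alpha_{js}(t^{-1}g)\overline{\alpha}_{ir}(g)\alpha_{ms}(t^{-1}h)\overline{\alpha}_{lr}(h)\,d\mu(\dot{g})\,d\mu(\dot{h})\,d\mu(\dot{t}).
\]
Thus the only thing left to verify is that the hypothesis on the basis $(\xi_{i})_{i=1}^{d\sigma}$ forces $c_{ijlm}=\delta_{il}\delta_{jm}$.

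Next I would substitute \ref{normal} termwise. The assumption states that, for each fixed pair of indices $(r,s)$, the triple $(G/K)$-integral equals $1/d_{\sigma}^{2}$ when $j=m$ and $i=l$, and vanishes otherwise; crucially this value does not depend on $(r,s)$. Hence the double sum over the $d_{\sigma}^{2}$ pairs $(r,s)$ collapses to a single scalar multiple:
\[
c_{ijlm}=\sum_{r,s=1}^{d_{\sigma}}\frac{\delta_{il}\delta_{jm}}{d_{\sigma}^{2}}=d_{\sigma}^{2}\cdot\frac{\delta_{il}\delta_{jm}}{d_{\sigma}^{2}}=\delta_{il}\delta_{jm}.
\]
Dividing by $d_{\sigma}$ then yields $\int_{G}u_{ij}^{L^{\sigma}}(t)\overline{u}_{lm}^{L^{\sigma}}(t)d\lambda(t)=\delta_{il}\delta_{jm}/d_{\sigma}$, which is exactly the asserted Schur relation, and this also reconciles with the parenthetical reformulation $c_{ijlm}=\delta_{il}\delta_{jm}$.

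I do not expect any genuine analytic obstacle here, since all of the hard work (decomposing the $G$-integral through the quotient via Theorem \ref{inv} and invoking the $K$-Schur relation \ref{sch1}) has already been carried out in the theorem that this corollary specializes. The single point that warrants care is purely combinatorial: one must confirm that the value $1/d_{\sigma}^{2}$ in \ref{normal} is independent of the summation indices $r,s$, so that the sum supplies precisely the factor $d_{\sigma}^{2}$ that cancels the $d_{\sigma}^{-2}$ and produces the clean Kronecker symbols. I would also flag, as a remark rather than part of the proof, that the corollary is a \emph{conditional} statement: it does not assert that a basis realizing \ref{normal} exists, only that such a choice yields the orthonormalized relation. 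Establishing existence would be the substantive task and would require an orthonormalization (Gram--Schmidt type) argument applied to the positive-definite Gram form $(i,j)\mapsto c_{ijlm}$ viewed as an inner product on the index pairs; that argument lies outside the scope of the present statement.
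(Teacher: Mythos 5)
Your proposal is correct and follows essentially the same route as the paper: quote the theorem's formula $\int_{G}u_{ij}^{L^{\sigma}}\overline{u}_{lm}^{L^{\sigma}}\,d\lambda = c_{ijlm}/d_{\sigma}$, substitute the normalization \ref{normal} termwise so the sum over the $d_{\sigma}^{2}$ pairs $(r,s)$ collapses to $\delta_{il}\delta_{jm}$, and divide by $d_{\sigma}$ (your write-up is in fact cleaner, since the paper's displayed sum carries a typographical slip, summing over $i,j$ where $r,s$ is meant). Your closing remark that the corollary is conditional on the existence of such a basis --- something the paper asserts only implicitly when it fixes such a basis ``in the sequel'' --- is a fair observation but, as you say, outside the statement being proved.
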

\begin{proof} 
With the conditions \ref{normal}, we have 
\begin{eqnarray*}
\int_{G}u_{ij}^{L^{\sigma}}(t)\overline{u}_{lm}^{L^{\sigma}}(t)d\lambda(t) & = & \small{\dfrac{1}{d_{\sigma}} \sum_{r,s = 1}^{d_{\sigma}} \int_{(G/K)^{3}}\alpha_{js}(t^{-1}g)\overline{\alpha}_{ir}(g) \alpha_{ms}(t^{-1}h)\overline{\alpha}_{lr}(h)d\mu(\dot{g})d\mu(\dot{h})d\mu(\dot{t})}\\
& = &  \dfrac{\delta_{il}\delta_{jm}}{d_{\sigma}}\sum_{i,j = 1}^{d_{\sigma}}\dfrac{1}{d_{\sigma}^{2}}\\
& = &  \dfrac{\delta_{il}\delta_{jm}}{d_{\sigma}}\dfrac{d_{\sigma}^{2}}{d_{\sigma}^{2}}\\
\int_{G}u_{ij}^{L^{\sigma}}(t)\overline{u}_{lm}^{L^{\sigma}}(t)d\lambda(t) & = & \dfrac{\delta_{il}\delta_{jm}}{d_{\sigma} }. 
\end{eqnarray*}
In the sequel, $(\theta_{i})_{i=1}^{\infty}$ will designate an orthonormal basis of $H^{L^{\sigma}}$ and  $ (\xi_{i})_{i=1}^{d\sigma}$ that of $H_{\sigma},$ where $ (\xi_{i})_{i=1}^{d\sigma}$ is chosen such that
\begin{eqnarray*}
\int_{G/K}d\mu(\dot{t})\int_{G/K}\alpha_{js}(t^{-1}g)\overline{\alpha}_{ir}(g)d\mu(\dot{g})\int_{G/K}\alpha_{ms}(t^{-1}h)\overline{\alpha}_{lr}(h)d\mu(\dot{h}) = \left\{\begin{array}{cl}
 \dfrac{1}{d_{\sigma}^{2}}  &\text{  if } j = m \text{  and } i = l \\
  & \\
0 &\text{ if not.}
\end{array}\right. 
\end{eqnarray*}
\end{proof}
\subsection{Fourier-Stieltjes transform on a locally compact group with a compact subgroup} 
\begin{definition}  
Assume $m\in M_{1}(G,\mathcal{A})$.
We shall define the Fourier-Stieljes transform of an arbitrary measure $m$ as the family $(\hat{m}(\sigma))_{\sigma\in \Sigma}$ of sesquilinear mappings on $H^{L^{\sigma}} \times H^{L^{\sigma}}$ into  $\mathcal{A}$, given by the relation
\begin{eqnarray}
\hat{m}(\sigma)(u,v) =  \int_{G}\left< \overline{U}_{t}^{L^{\sigma}}u,v \right>_{H^{L^{\sigma}}}dm(t) = \int_{G}\int_{G/K} \left<\overline{u}(t^{-1}g),v(g)\right>d\mu(\dot{g})dm(t).
\end{eqnarray}
The Fourier transform  of a function $f\in L_{1}(G,\mathcal{A},\lambda)$, where $\lambda$ denotes a Haar measure on $ G $, is a family $(\hat{f}(\sigma))_{\sigma\in \Sigma}$ of sesquilinear mappings of $H^{L^{\sigma}} \times H^{L^{\sigma}}$  into $\mathcal{A},$ given by the relation
\begin{eqnarray}
\hat{f}(\sigma)(u,v) =  \int_{G}\left< \overline{U}_{t}^{L^{\sigma}}u,v \right>_{H^{L^{\sigma}}}f(t)d\lambda(t) = \int_{G}\int_{G/K} \left<\overline{u}(t^{-1}g),v(g)\right>f(t)d\mu(\dot{g})d\lambda(t).
\end{eqnarray}
\end{definition}
\subsection{Properties of the Fourier-Stieljes transform}
\mbox{  }\\
Let denote by $\mathcal{S}(\Sigma,\mathcal{A}) = \displaystyle{\prod_{\sigma\in\Sigma}}\mathcal{S}(H^{L^{\sigma}}\times H^{L^{\sigma}}, \mathcal{A}),$ where $\mathcal{S}(H^{L^{\sigma}}\times H^{L^{\sigma}}, \mathcal{A})$ is the set of all the sesquilinear mappings of $H^{L^{\sigma}}\times H^{L^{\sigma}}$ into $\mathcal{A}$.
Also, assume $ \displaystyle{\prod_{\sigma\in\Sigma}}\mathcal{S}(H^{L^{\sigma}}\times H^{L^{\sigma}}, \mathcal{A})$ is a vector space for addition and multiplication by a scalar of mappings. 
\begin{definition}
For $\Phi\in  \mathcal{S}(\Sigma,\mathcal{A}) = \displaystyle{\prod_{\sigma\in\Sigma}}\mathcal{S}(H^{L^{\sigma}}\times H^{L^{\sigma}}, \mathcal{A})$, note $\Vert \Phi \Vert_{\infty}$ the quantity is defined by 
\begin{eqnarray}
\Vert \Phi \Vert_{\infty} & = &  \sup\lbrace \Vert \Phi (\sigma)\Vert : \sigma\in\Sigma\rbrace
\end{eqnarray}
with $\Vert\Phi(\sigma)\Vert = \sup \lbrace  \Vert \Phi(\sigma)(u,v)\Vert _{\mathcal{A}}\mbox{ }\mbox{  } : \mbox{  }\Vert u \Vert_{H^{L^{\sigma}}} \leq 1,\mbox{  }\Vert v \Vert_{H^{L^{\sigma}}} \leq 1\rbrace$.
Let define:
\begin{itemize}
\item[(i)] $\displaystyle \mathcal{S}_{\infty}(\Sigma,\mathcal{A})$ =  $\left\lbrace \Phi \in \mathcal{S}(\Sigma,\mathcal{A}): \Vert \Phi \Vert_{\infty} < \infty 
\right\rbrace$.  
\item[(ii)] $\displaystyle \mathcal{S}_{00}(\Sigma,\mathcal{A})$ = $\left\lbrace \Phi \in \mathcal{S}(\Sigma,\mathcal{A}): \lbrace  \sigma \in\Sigma :\Phi(\sigma) \neq 0 \rbrace \quad \mbox{is finite}\right\rbrace$.
\item[(iii)] $\displaystyle\mathcal{S}_{0}(\Sigma,\mathcal{A})$ = $\left\lbrace \Phi \in \mathcal{S}(\Sigma,\mathcal{A}): \forall \varepsilon > 0 \quad\lbrace  \sigma \in\Sigma  \mbox{ }\mbox{ }:\mbox{ }\mbox{ }\Vert\Phi(\sigma)\Vert > \varepsilon \rbrace \quad\mbox{is finite}\right\rbrace$.
\end{itemize}
\end{definition}   
\begin{proposition}\label{dec} \mbox{  } 
Let us consider $\Phi$ in  $\mathcal{S}(\Sigma,\mathcal{A}).$ 
There is the matrix $(a_{ij}^{\sigma})_{1 \leq i, j \leq \infty}$,  $a_{ij}^{\sigma}\in \mathcal{A}$ such that 
\[\Phi (\sigma) = \sum _{i,j = 1}^{\infty}d_{\sigma}  a_{ij}^{\sigma}\hat{u}_{ij}^{L^{\sigma}} \] 
with $\hat{u}_{ij}^{L^{\sigma}}$ the Fourier transform of $u_{ij}^{L^{\sigma}}$ given by
\[\hat{u}_{ij}^{L^{\sigma}}(\sigma)(u,v)  =  \int_{G}\left< \overline{U}_{t}^{L^{\sigma}}u,v \right>_{H^{L^{\sigma}}}u_{ij}^{L^{\sigma}}(t)d\lambda(t).\]
\end{proposition}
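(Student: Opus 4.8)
The plan is to show that, at the index $\sigma$, the forms $\hat u_{ij}^{L^\sigma}$ constitute (up to the factor $d_\sigma$) a dual system to the matrix units of sesquilinear forms on $H^{L^\sigma}$ relative to the basis $(\theta_i)$, so that the coefficients $a_{ij}^\sigma$ can be read off by testing $\Phi(\sigma)$ against pairs of basis vectors. Everything rests on the Schur orthogonality relation already proved, used now in its normalised form.

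First I would compute the action of $\hat u_{ij}^{L^\sigma}(\sigma)$ on basis pairs. Unwinding the definition of the Fourier transform and using that the kernel $\langle \overline U_t^{L^\sigma}\theta_m,\theta_l\rangle$ equals $\overline{u_{lm}^{L^\sigma}(t)}$, one obtains
\[
\hat u_{ij}^{L^\sigma}(\sigma)(\theta_m,\theta_l)=\int_G u_{ij}^{L^\sigma}(t)\,\overline{u_{lm}^{L^\sigma}(t)}\,d\lambda(t).
\]
This is precisely the integral appearing in the Schur orthogonality relation, and the Corollary to that theorem (for the normalised basis $(\xi_i)$) evaluates it as $\delta_{il}\delta_{jm}/d_\sigma$. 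Consequently $d_\sigma\hat u_{ij}^{L^\sigma}(\sigma)$ is the sesquilinear form sending $(\theta_m,\theta_l)$ to $\delta_{jm}\delta_{il}$, that is, the form $(u,v)\mapsto\langle u,\theta_j\rangle\,\overline{\langle v,\theta_i\rangle}$.

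Next I would set $a_{ij}^\sigma:=\Phi(\sigma)(\theta_j,\theta_i)\in\mathcal A$ and verify the asserted identity by evaluating on an arbitrary basis pair $(\theta_m,\theta_l)$. By the previous step the series on the right collapses to $\sum_{i,j}a_{ij}^\sigma\,\delta_{il}\delta_{jm}=a_{lm}^\sigma=\Phi(\sigma)(\theta_m,\theta_l)$, matching the left side. Since both $\Phi(\sigma)$ and $\sum_{i,j}d_\sigma a_{ij}^\sigma\hat u_{ij}^{L^\sigma}(\sigma)$ are sesquilinear, writing $u=\sum_m\langle u,\theta_m\rangle\theta_m$ and $v=\sum_l\langle v,\theta_l\rangle\theta_l$ propagates the agreement from basis pairs to general arguments.

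The delicate point, which I expect to be the main obstacle, is the handling of the infinite double index: since $H^{L^\sigma}$ is infinite-dimensional one must specify the topology in which $\sum_{i,j}d_\sigma a_{ij}^\sigma\hat u_{ij}^{L^\sigma}$ converges in $\mathcal S(\Sigma,\mathcal A)$ and justify interchanging the two infinite sesquilinear expansions of $u$ and $v$ with the summation. For a bounded $\Phi(\sigma)$ this is governed by Bessel-type control on the $a_{ij}^\sigma$; for a general element of $\mathcal S(\Sigma,\mathcal A)$ the equality is safest read as a weak (pointwise) identity on the dense linear span of the $\theta_i$, extended by continuity whenever $\Phi(\sigma)$ is continuous.
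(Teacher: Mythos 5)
Your proposal is correct and follows essentially the same route as the paper: both set $a_{ij}^{\sigma}:=\Phi(\sigma)(\theta_{j},\theta_{i})$, use the normalised Schur orthogonality corollary $\int_{G}u_{ij}^{L^{\sigma}}\overline{u}_{lm}^{L^{\sigma}}\,d\lambda=\delta_{il}\delta_{jm}/d_{\sigma}$ to show that $d_{\sigma}\hat{u}_{ij}^{L^{\sigma}}$ acts as dual matrix units on the basis $(\theta_{i})$, and then recover $\Phi(\sigma)$ by sesquilinear expansion. Your closing remark on the convergence of the infinite double sum and the interchange of summation with $\Phi(\sigma)$ is a point the paper passes over silently, but it does not change the structure of the argument.
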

\begin{proof} 
For all $u, v \in H^{L^{\sigma}}$  such that
$u = \displaystyle \sum _{j = 1}^{\infty} \beta_{j} \theta_{j}$ and  $v = \displaystyle \sum _{i = 1}^{\infty} \gamma_{i} \theta_{i}$,
\[ \Phi (\sigma) (u, v) = \sum _{i,j = 1}^{\infty}\beta_{j}\overline{\gamma}_{i}\Phi (\sigma) (\theta_{j}, \theta_{i}).\]
Defining $(\Phi (\sigma) (\theta_{j}, \theta_{i}))$ = $( a_{ij}^{\sigma})$, the matrix of $\Phi (\sigma) $ in the basis $(\theta_{i})_{i=1}^{\infty}$ and
we have
\begin{eqnarray*}
\hat{u}_{ij}^{L^{\sigma}}(\sigma)(u,v) & = & \int_{G}\left< \overline{U}_{t}^{L^{\sigma}}u,v \right>_{H^{L^{\sigma}}}u_{ij}^{L^{\sigma}}(t)d\lambda(t)\\
& = & \sum_{l,m = 1}^{\infty} \beta_{m}\overline{\gamma}_{l}\int_{G} \overline{u}_{lm}^{L^{\sigma}}u_{ij}^{L^{\sigma}}d\lambda (t) =  \dfrac{\beta_{j}\overline{\gamma}_{i}}{d_{\sigma}}
\end{eqnarray*}.
\begin{eqnarray*}
\Phi (\sigma) (u, v) & = & \sum _{i,j = 1}^{\infty} a_{ij}^{\sigma}\beta_{j}\overline{\gamma}_{i}  =   \sum _{i,j = 1}^{\infty}d_{\sigma}  a_{ij}^{\sigma}\hat{u}_{ij}^{L^{\sigma}}(u,v)
\end{eqnarray*}
which yields \[\Phi (\sigma) = \sum _{i,j = 1}^{\infty}d_{\sigma} a_{ij}^{\sigma}\hat{u}_{ij}^{L^{\sigma}}.\]
\end{proof}
\begin{corollary}
For any $f \in L_{1}(G, \mathcal{A}, \lambda)$, there is a matrix  $(a_{ij})_{1 \leq i, j \leq \infty}$, $a_{ij}\in \mathcal{A}$  such that  
\[\hat{f} (\sigma) = \sum _{i,j = 1}^{\infty}d_{\sigma} a_{ij}^{\sigma}\hat{u}_{ij}^{L^{\sigma}}.  \]
\end{corollary}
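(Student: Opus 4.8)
The plan is to deduce this directly from Proposition \ref{dec}, whose conclusion applies to \emph{any} element of $\mathcal{S}(\Sigma,\mathcal{A})$. Hence the only thing that must be established is that the family $\hat{f} = (\hat{f}(\sigma))_{\sigma\in\Sigma}$ actually belongs to $\mathcal{S}(\Sigma,\mathcal{A})$, i.e. that for each fixed $\sigma$ the map $\hat{f}(\sigma)$ is a well-defined sesquilinear mapping of $H^{L^\sigma}\times H^{L^\sigma}$ into $\mathcal{A}$.

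First I would check that the defining integral converges in $\mathcal{A}$. Since $U^{L^\sigma}$ is a unitary representation, $\|U_t^{L^\sigma}u\|_{H^{L^\sigma}} = \|u\|_{H^{L^\sigma}}$ for every $t$, so by the Cauchy--Schwarz inequality $|\langle \overline{U}_t^{L^\sigma}u, v\rangle_{H^{L^\sigma}}| \le \|u\|_{H^{L^\sigma}}\|v\|_{H^{L^\sigma}}$. Consequently
\[\left\|\hat{f}(\sigma)(u,v)\right\|_{\mathcal{A}} \le \|u\|_{H^{L^\sigma}}\,\|v\|_{H^{L^\sigma}}\int_{G} \|f(t)\|_{\mathcal{A}}\,d\lambda(t) = \|u\|_{H^{L^\sigma}}\,\|v\|_{H^{L^\sigma}}\,\|f\|_{1},\]
which is finite because $f\in L_{1}(G,\mathcal{A},\lambda)$. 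Thus the $\mathcal{A}$-valued integral is well defined and $\hat{f}(\sigma)(u,v)\in\mathcal{A}$.

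Next I would verify sesquilinearity: the integrand $t\mapsto \langle \overline{U}_t^{L^\sigma}u,v\rangle_{H^{L^\sigma}}$ is linear in one argument and conjugate-linear in the other, according to the convention fixed for the inner product, and these properties are preserved under integration against the fixed $\mathcal{A}$-valued function $f$. Hence $(u,v)\mapsto\hat{f}(\sigma)(u,v)$ is sesquilinear, so $\hat{f}(\sigma)\in\mathcal{S}(H^{L^\sigma}\times H^{L^\sigma},\mathcal{A})$; as this holds for every $\sigma\in\Sigma$, we conclude $\hat{f}\in\mathcal{S}(\Sigma,\mathcal{A})$ (in fact the bound above shows $\hat{f}\in\mathcal{S}_{\infty}(\Sigma,\mathcal{A})$).

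Finally, applying Proposition \ref{dec} to $\Phi=\hat{f}$ yields a matrix $(a_{ij}^{\sigma})_{1\le i,j\le\infty}$ with entries in $\mathcal{A}$ such that $\hat{f}(\sigma)=\sum_{i,j=1}^{\infty} d_{\sigma}\,a_{ij}^{\sigma}\,\hat{u}_{ij}^{L^\sigma}$; tracing through that proof identifies the coefficients explicitly as $a_{ij}^{\sigma} = \hat{f}(\sigma)(\theta_{j},\theta_{i}) = \int_{G} \langle \overline{U}_t^{L^\sigma}\theta_{j},\theta_{i}\rangle_{H^{L^\sigma}}\, f(t)\,d\lambda(t)$. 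There is no serious obstacle here; the only point requiring care is the justification of the $\mathcal{A}$-valued (Bochner-type) integral and the attendant interchange of summation and integration that is implicit when one evaluates $\hat{f}(\sigma)$ on the basis expansions $u=\sum_{j}\beta_{j}\theta_{j}$ and $v=\sum_{i}\gamma_{i}\theta_{i}$, which is legitimate by the uniform bound established above together with dominated convergence.
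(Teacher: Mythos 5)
Your proposal is correct and takes essentially the paper's route: the paper states this corollary without a separate proof, treating it as an immediate application of Proposition \ref{dec} to $\Phi=\hat{f}$, which is exactly what you do. Your added verifications --- that the integral defining $\hat{f}(\sigma)(u,v)$ converges with $\Vert \hat{f}(\sigma)(u,v)\Vert_{\mathcal{A}}\le \Vert u\Vert\,\Vert v\Vert\,\Vert f\Vert_{1}$ by unitarity and Cauchy--Schwarz, that $\hat{f}(\sigma)$ is sesquilinear so $\hat{f}\in\mathcal{S}(\Sigma,\mathcal{A})$ (indeed $\mathcal{S}_{\infty}(\Sigma,\mathcal{A})$), and that the coefficients are $a_{ij}^{\sigma}=\hat{f}(\sigma)(\theta_{j},\theta_{i})$ --- are precisely the details the paper leaves implicit, and they also supply the boundedness needed to justify the basis expansion used in the proposition's proof.
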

\begin{lemma}\label{stone2}
Let us denote by $\mathcal{L}_{\sigma}(G)$ the set of finite linear combinations functions $t  \longmapsto \left<U_{t}^{L^{\sigma}}u,v \right>_{H^{L^{\sigma}}}$ and $\displaystyle\mathcal{L}(G) = \displaystyle\bigcup _{\sigma \in \Sigma}\mathcal{L}_{\sigma}(G).$
$\mathcal{L}(G)$ is dense in $\mathfrak{C}_{0}(G)$ for the topology of the uniform convergence, where $\mathfrak{C}_{0}(G)$ denotes the  space of continuous funtions of $G$ into $\mathbb{K}$ vanishing at infinity.
\end{lemma}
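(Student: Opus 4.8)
The plan is to recognize $\mathcal{L}(G)$ --- interpreted as the linear span of all the coefficient functions $t\mapsto \langle U_t^{L^\sigma}u,v\rangle$ with $\sigma\in\Sigma$ and $u,v\in H^{L^\sigma}$ --- as a point-separating, nowhere-vanishing ($*$-)subalgebra of $\mathfrak{C}_0(G)$, and then to invoke the Stone--Weierstrass theorem (Theorem \ref{stone}). Before checking its hypotheses I would first record that $\mathcal{L}(G)\subset \mathfrak{C}_0(G)$. Continuity of each coefficient is immediate from the strong continuity of $U^{L^\sigma}$. For vanishing at infinity I would lean on Proposition \ref{L2}: when $u,v$ lie in the dense subspace $H_0^{L^\sigma}$ the coefficient $t\mapsto\langle U_t^{L^\sigma}u,v\rangle$ has compact support by the support computation of that proposition, and for arbitrary $u,v\in H^{L^\sigma}$ the estimate
\[ \bigl|\langle U_t^{L^\sigma}u,v\rangle-\langle U_t^{L^\sigma}u',v'\rangle\bigr|\le \|u-u'\|\,\|v\|+\|u'\|\,\|v-v'\| \]
(valid uniformly in $t$ because $U_t^{L^\sigma}$ is unitary) exhibits every coefficient as a uniform limit of compactly supported functions, hence an element of $\mathfrak{C}_0(G)$.

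Verifying the non-vanishing property (ii) is then immediate: given $t\in G$, pick $u\neq 0$ and set $v=U_t^{L^\sigma}u$, so that the coefficient takes the value $\|u\|^2\neq 0$ at $t$. For the separation property (i) I would argue by cases on $r:=st^{-1}\neq e$. If $r\in K$, then by Peter--Weyl the representations $L^\sigma$ separate the points of $K$, so $L^\sigma(r)\neq \mathrm{Id}$ for some $\sigma$; since $K$ is normal, the earlier Lemma gives $U_r^{L^\sigma}u(g)=u(r^{-1}g)=L^\sigma_{r^{-1}}u(g)$, whence $U_r^{L^\sigma}\neq \mathrm{Id}$. If $r\notin K$, then, $K$ being normal and equal to $\bigcap_{g}gKg^{-1}$, the element $r$ acts nontrivially on $G/K$, so the quasi-regular representation $U^{L^{\sigma_0}}$ attached to the trivial character $\sigma_0$ of $K$ satisfies $U_r^{L^{\sigma_0}}\neq \mathrm{Id}$. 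In either case some coefficient distinguishes $s$ from $t$.

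It remains to check that $\mathcal{L}(G)$ is a $*$-subalgebra, as required for the complex case (b). Closure under complex conjugation is clean: $\overline{\langle U_t^{L^\sigma}u,v\rangle}$ is a coefficient of the conjugate representation $\overline{U^{L^\sigma}}$, and since conjugation commutes with induction and $\Sigma$ is stable under conjugation one has $\overline{U^{L^\sigma}}\cong U^{L^{\bar\sigma}}$ with $\bar\sigma\in\Sigma$, so the conjugate coefficient again lies in $\mathcal{L}(G)$. Closure under products is the delicate point and the step I expect to be the main obstacle: a product of a coefficient of $U^{L^\sigma}$ with one of $U^{L^\tau}$ is a coefficient of $U^{L^\sigma}\otimes U^{L^\tau}$, which I would decompose via the projection formula $\mathrm{Ind}_K^G(L^\sigma)\otimes\mathrm{Ind}_K^G(L^\tau)\cong \mathrm{Ind}_K^G\!\bigl(L^\sigma\otimes\mathrm{Res}_K\mathrm{Ind}_K^G L^\tau\bigr)$, together with Mackey's description of $\mathrm{Res}_K\mathrm{Ind}_K^G L^\tau$ for the normal subgroup $K$ (a sum of $G$-conjugates $L^{\tau_g}$) and the Clebsch--Gordan decomposition of the $K$-tensor products $L^\sigma\otimes L^{\tau_g}$ into irreducibles $L^\rho$. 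This gives $U^{L^\sigma}\otimes U^{L^\tau}\cong\bigoplus U^{L^\rho}$, so the product is again a (limit of) finite combinations of coefficients of induced representations. The genuine difficulty is that when $G/K$ is infinite these are direct integrals rather than finite sums, so I would run the product-closure argument at the level of the uniform closure $\overline{\mathcal{L}(G)}$ and apply Theorem \ref{stone} to that closure, which still yields the density claim. With $\mathcal{L}(G)\subset\mathfrak{C}_0(G)$ a point-separating, nowhere-vanishing ($*$-)subalgebra, Theorem \ref{stone}(a) in the real case and Theorem \ref{stone}(b) in the complex case give density of $\mathcal{L}(G)$ in $\mathfrak{C}_0(G)$ for the uniform topology.
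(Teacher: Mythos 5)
Your proposal follows the same overall route as the paper: verify the hypotheses of Theorem \ref{stone} for $\mathcal{L}(G)$ and conclude density. Within that shared strategy, however, you repair two verifications that the paper gets wrong. For the non-vanishing property, the paper claims that since $U_{t}^{L^{\sigma}}$ is invertible there exist basis indices $i,k$ with $U_{t}^{L^{\sigma}}\theta_{i} = \theta_{k}$ --- false for a general unitary operator --- whereas your choice $v = U_{t}^{L^{\sigma}}u$, giving the value $\Vert u \Vert^{2} \neq 0$, is correct. For separation, the paper deduces $U_{t_{2}t_{1}^{-1}}^{L^{\sigma}} = I_{H^{L^{\sigma}}}$ for all $\sigma$ and then simply asserts that this forces $t_{1} = t_{2}$; that assertion is exactly the faithfulness of the family $\lbrace U^{L^{\sigma}} \rbrace_{\sigma \in \Sigma}$, which the paper never proves. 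Your case analysis on $r = st^{-1}$ (Peter--Weyl on $K$ combined with $U_{r}^{L^{\sigma}}u = L_{r^{-1}}^{\sigma}\circ u$ when $r \in K$, the quasi-regular representation when $r \notin K$, using normality of $K$ in both cases) supplies precisely the missing argument. Likewise, your approximation argument for $\mathcal{L}(G) \subset \mathfrak{C}_{0}(G)$ is an actual proof of an inclusion the paper merely asserts.

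The genuine gap is multiplicative closure, and it is a gap in your proof as well as --- more seriously, because unacknowledged --- in the paper's. Stone--Weierstrass needs $\mathcal{P}$ to be a subalgebra (the paper's own statement of Theorem \ref{stone} wrongly omits this in case (a), and case (b), the one relevant for $\mathbb{K} = \mathbb{C}$, demands a $*$-subalgebra); moreover $\mathcal{L}(G) = \bigcup_{\sigma}\mathcal{L}_{\sigma}(G)$ as defined is a union of linear spans, so it is not even obviously closed under addition across different $\sigma$ --- your reading of it as the full linear span quietly fixes that, but products remain. You correctly identify that a product of coefficients is a coefficient of $U^{L^{\sigma}}\otimes U^{L^{\tau}}$, and that decomposing this via the projection formula and Mackey theory yields, when $G/K$ is infinite, direct integrals of induced representations rather than finite sums. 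Your proposed remedy --- work with the uniform closure $\overline{\mathcal{L}(G)}$ --- still requires showing that coefficients of such direct integrals are uniform-in-$t$ limits of finite linear combinations from $\mathcal{L}(G)$, which you do not establish; so your proof is incomplete at exactly the point you flag. The paper, by contrast, checks only conjugation-closure (with the word ``obviously'') and never mentions products at all, so its appeal to Theorem \ref{stone} rests on the defective statement of that theorem. In short: same approach, you are strictly more rigorous on three of the four hypotheses, and the one step neither of you closes is the algebra property of $\mathcal{L}(G)$.
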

\begin{proof}
\begin{itemize}
\item[(i)] First we have $ \mathcal{L}(G) \subset \mathfrak{C}_{0}(G)$ moreover  for $f \in \mathcal{L}(G)$ we have abviously $ \overline{f} \in \mathcal{L}(G).$
\item[(ii)] Let $t_{1}$ and $t_{2}$ be two elements of $ G$ such that  $t_{1} \neq t_{2}$. Let us suppose by contradiction that for every $f\in \mathcal{L}(G),$ $ f(t_{1}) =  f(t_{2})$. Thus, by chosing $f = f_{k} = u_{ik}^{L^{\sigma}}$  we have $\left < U_{t_{1}}^{L^{\sigma}}\theta_{i}, \theta_{k} \right> = \left < U_{t_{2}}^{L^{\sigma}}\theta_{i}, \theta_{k} \right>$  for $i,k \in \{ 1, 2,...\}$. Let us fix $i$ and make $k$ running over the set $\{ 1, 2,...\}.$ Then, antilinear mappings $\phi_{U_{t_{1}}^{L^{\sigma}}\theta_{i}} = \left < U_{t_{1}}^{L^{\sigma}}\theta_{i}, . \right> $ and $\phi_{U_{t_{2}}^{L^{\sigma}}\theta_{i}} = \left < U_{t_{2}}^{L^{\sigma}}\theta_{i}, . \right> $ are identically equal in $ H^{L^{\sigma}}$ therefore $U_{t_{1}}^{L^{\sigma}}\theta_{i} = U_{t_{2}}^{L^{\sigma}}\theta_{i} \Longrightarrow \theta_{i} = U_{t_{2}t_{1}^{-1}}^{L^{\sigma}}\theta_{i} \Longrightarrow U_{t_{2}t_{1}^{-1}}^{L^{\sigma}} \equiv I_{ H^{L^{\sigma}}}.$
It means that $t_{2} = t_{1}$. There is a contradiction; hence, there is $f\in \mathcal{L}(G)$ such that $ f(t_{1}) \neq  f(t_{2})$.
\item[(iii)] Let $t$ be any element of $G$; since $U_{t}^{L^{\sigma}}$ is invertible, there exist $i,k \in \{ 1, 2,...\}$ such that $U_{t}^{L^{\sigma}}\theta_{i} = \theta_{k}$. Therefore, we have $\left < U_{t}^{L^{\sigma}}\theta_{i}, \theta_{k} \right> = \left < \theta_{k}, \theta_{k} \right > = 1 \neq 0$ i.e $f(t) \neq 0$.
\end{itemize}
According to theorem \ref{stone} $ \mathcal{L}(G)$ is dense in $\mathfrak{C}_{0}(G)$.
\end{proof}
\begin{theorem}\label{inj} The mapping $ m \longmapsto \hat{m}$ from $ M_{1}(G,\mathcal{A})$ into $\mathcal{S}_{\infty}(\Sigma,\mathcal{A})$ is linear,  injective and continuous.
\end{theorem}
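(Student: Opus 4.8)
The plan is to establish the three asserted properties in turn, the first two being routine and injectivity being the crux.

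For linearity I would simply observe that $\hat m(\sigma)(u,v)=\int_G\langle\overline U_t^{L^\sigma}u,v\rangle\,dm(t)$ depends linearly on $m$ through the integral, so $m\mapsto\hat m$ is linear. For continuity together with the fact that $\hat m$ actually lands in $\mathcal S_\infty(\Sigma,\mathcal A)$, the key input is that each $U^{L^\sigma}$ is unitary. By Cauchy--Schwarz, $|\langle\overline U_t^{L^\sigma}u,v\rangle|\le\|u\|_{H^{L^\sigma}}\|v\|_{H^{L^\sigma}}$ for every $t$; integrating against the total variation $|m|$, which dominates $m$ and has mass $\|m\|$, gives
\begin{equation*}
\|\hat m(\sigma)(u,v)\|_{\mathcal A}\le\|u\|\,\|v\|\,|m|(G)=\|u\|\,\|v\|\,\|m\|.
\end{equation*}
Taking the supremum over the unit balls yields $\|\hat m(\sigma)\|\le\|m\|$ for every $\sigma$, hence $\|\hat m\|_\infty\le\|m\|<\infty$. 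This shows at once that $\hat m\in\mathcal S_\infty(\Sigma,\mathcal A)$ and that $m\mapsto\hat m$ is norm-decreasing, in particular continuous.

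Injectivity is where the real work lies, and it is where I expect the main difficulty. I would assume $\hat m=0$, so that $\int_G\langle\overline U_t^{L^\sigma}u,v\rangle\,dm(t)=0$ for all $\sigma$ and all $u,v$; in particular $\int_G\overline{u_{ij}^{L^\sigma}}(t)\,dm(t)=0$ for all $i,j,\sigma$. By linearity the functional $\phi\mapsto\int_G\phi\,dm$ annihilates every function in the linear span of the coefficients $t\mapsto\langle\overline U_t^{L^\sigma}u,v\rangle$. Since $\mathcal L(G)$ is stable under complex conjugation by Lemma \ref{stone2}(i), this span and $\mathcal L(G)$ have the same closure, which by Lemma \ref{stone2} is all of $\mathfrak C_0(G)$ in the uniform norm. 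Because $m$ is a \emph{bounded} vector measure, $\phi\mapsto\int_G\phi\,dm$ is continuous on $(\mathfrak C_0(G),\|\cdot\|_\infty)$, so it vanishes identically on $\mathfrak C_0(G)$, and a fortiori on every scalar $\phi\in\mathcal K(G)$. Finally, a bounded $\mathcal A$-valued measure whose integrals against all scalar test functions in $\mathcal K(G)$ vanish must be the zero measure, by uniqueness in the Riesz representation; hence $m=0$.

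The two delicate points I anticipate are therefore (a) identifying the span of the transform's coefficient functions with $\mathcal L(G)$ up to conjugation, so that Lemma \ref{stone2} may be invoked, and (b) exploiting the boundedness of $m$ to pass from a dense subset of $\mathfrak C_0(G)$ to all scalar test functions and thence to $m=0$. Point (b) is precisely why the theorem is stated for $M_1(G,\mathcal A)$ rather than for arbitrary vector measures: without a finite total variation the extension of $\phi\mapsto\int_G\phi\,dm$ to $\mathfrak C_0(G)$ need not exist, and the density argument would break down.
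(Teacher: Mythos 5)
Your proposal is correct and follows essentially the same route as the paper: the norm bound $\Vert \hat{m}(\sigma)\Vert \leq \Vert m \Vert$ from unitarity of $U_{t}^{L^{\sigma}}$ gives continuity and membership in $\mathcal{S}_{\infty}(\Sigma,\mathcal{A})$, and injectivity follows from the density of $\mathcal{L}(G)$ in $\mathfrak{C}_{0}(G)$ (Lemma \ref{stone2}) together with the boundedness of $m$. Your treatment is in fact slightly more careful than the paper's, since you explicitly handle the conjugation of the coefficient functions via closure of $\mathcal{L}(G)$ under $f \mapsto \overline{f}$ and spell out the final passage from vanishing on $\mathcal{K}(G)$ to $m = 0$, both of which the paper leaves implicit.
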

\begin{proof}  Let $m, n \in M_{1}(G,\mathcal{A})$ such that $\hat{m} = \hat{n}.$ For any $u, v \in H^{L^{\sigma}} \times H^{L^{\sigma}}$ and   any $\sigma \in \Sigma,$ we have:
\begin{eqnarray}\hat{m} = \hat{n}&\Longleftrightarrow &\int_{G}\left< \overline{U_{t}^{L^{\sigma}}}u,v \right>_{H^{L^{\sigma}}}dn(t) =  \int_{G}\left< \overline{U_{t}^{L^{\sigma}}}u,v \right>_{H^{L^{\sigma}}}dm(t)\cr
&\Longleftrightarrow& \displaystyle\int_{G}\left< \overline{U_{t}^{L^{\sigma}}}u,v \right>_{H^{L^{\sigma}}}d(n-m)(t) = 0
\end{eqnarray}
for any $\sigma$ in $\Sigma,$ and $u$,$v$ in $H^{L^{\sigma}}$.

$ \mathcal{L}(G)$ is dense  in  $\mathfrak{C}_{0}(G)$ according to the previous lemma. Then, $\mathcal{L}(G)$ is dense in $\mathcal{K}(G)$. Thus $n-m$ can be viewed as a linear map of $\mathcal{K}(G)$ which is identically null. Then $\displaystyle\int_{G}\left< \overline{U_{t}^{L^{\sigma}}}u,v \right>_{H^{L^{\sigma}}}d(n-m)(t) = 0$  $\Longrightarrow  n- m \equiv 0$ ie $m = n.$ The mapping $ m \longmapsto \hat{m}$ is therefore injective.

Next, let us now prove the continuity of the mapping $ m \longmapsto \hat{m}$.   
\begin{eqnarray*}
\Vert \hat{m}( \sigma) \Vert  & = & \sup \left\lbrace  \Vert  \hat{m}(\sigma) (u,v) \Vert_{\mathcal{A}} :  \Vert u \Vert _{H^{L^{\sigma}}},  \leq 1 \text{ and  }\Vert v \Vert _{H^{L^{\sigma}}},  \leq 1 \right\rbrace \\
& = & \sup \left\lbrace  \left\Vert  \int_{G}\left< \overline{U_{t}^{L^{\sigma}}}u,v \right>_{H^{L^{\sigma}}}dm(t)\right\Vert_{\mathcal{A}} :  \Vert u \Vert _{H^{L^{\sigma}}},  \leq 1 \mbox{ and  }\Vert v \Vert _{H^{L^{\sigma}}},  \leq 1 \right\rbrace \\
& \leq &  \int_{G} \chi _{G}d \vert m \vert =  \Vert m \Vert,
\end{eqnarray*}
since $ U_{t}^{L^{\sigma}}$ is unitary.
Thus, $\Vert \hat{m}( \sigma) \Vert \leq \Vert m \Vert $, $\sigma \in \Sigma$ and $\Vert \hat{m} \Vert_{\infty} \leq \Vert m \Vert $. As a consequence,  $ \hat{m} \in  \mathcal{S}_{\infty}(\Sigma,\mathcal{A})$ and the mapping is continuous.

\begin{corollary}\label{inj2}
The mapping $f \longmapsto \hat{f}$ from $ L_{1}(G, \lambda, \mathcal{A})$ into $\mathfrak{S}_{\infty}(\Sigma,\mathcal{A})$ is linear, injective, and continuous.
\end{corollary}  
\textbf{Proof}
Here we consider $f,g \in L_{1}(G, \lambda, \mathcal{A})$ such that $\hat{f} = \hat{g}$ then 
\begin{eqnarray*} 
\int_{G}\left< \overline{U_{t}^{L^{\sigma}}}u,v \right>_{H^{L^{\sigma}}}fd\lambda(t)  =\int_{G}\left< \overline{U_{t}^{L^{\sigma}}}u,v \right>_{H^{L^{\sigma}}}fd\lambda(t) &\Longrightarrow & \int_{G}\left< \overline{U_{t}^{L^{\sigma}}}u,v \right>_{H^{L^{\sigma}}}(f - g)d\lambda(t) = 0.\\
\end{eqnarray*}
For the same reasons as the previous proof we have $f-g \equiv 0$ then $f = g$.
\begin{eqnarray*}
\Vert \hat{f}( \sigma) \Vert  & = & \sup \left\lbrace  \Vert  \hat{f}(\sigma) (u,v) \Vert_{\mathcal{A}} :  \Vert u \Vert _{H^{L^{\sigma}}},  \leq 1 \text{ and  }\Vert v \Vert _{H^{L^{\sigma}}},  \leq 1 \right\rbrace \\ \\
& = & \sup \left\lbrace  \left\Vert  \int_{G}\left< \overline{U_{t}^{L^{\sigma}}}u,v \right>_{H^{L^{\sigma}}}f(t)d\lambda(t)\right\Vert :  \Vert u \Vert _{H^{L^{\sigma}}},  \leq 1 \mbox{ and  }\Vert v \Vert _{H^{L^{\sigma}}},  \leq 1 \right\rbrace \\ \\
& \leq & \int_{G}\Vert f(t) \Vert d\lambda(t) = \Vert f \Vert_{1}.
\end{eqnarray*}
Thus, $\Vert \hat{f}( \sigma) \Vert \leq \Vert f \Vert_{1} $, $\sigma \in \Sigma$ and $\Vert \hat{f} \Vert_{\infty} \leq \Vert f \Vert_{1} $. Hence,  $ \hat{f} \in  \mathfrak{S}_{\infty}(\Sigma,\mathcal{A})$ and the mapping is continuous.
\end{proof}
\subsection{Modified Peter-Weyl theorem} 
The following theorem gives the inverse formula of Fourier transform in the context of our work. 
\begin{theorem}
For every $f\in L_{2}(G,\mathcal{A})$, there is $a_{ij}^{\sigma} \in \mathcal{A},$  $ 1 \leq i, j < \infty$, $\sigma \in \Sigma$ such that 
\begin{eqnarray}
f & = & \displaystyle\sum_{\sigma \in \Sigma}d_{\sigma}\displaystyle\sum_{i,j = 1}^{\infty}a_{ij}^{\sigma}u_{ij}^{L^{\sigma}}.
\end{eqnarray} 
\end{theorem}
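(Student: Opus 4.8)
The plan is to follow the classical Peter--Weyl template, treating the family $\{u_{ij}^{L^{\sigma}}\}$ as the analogue of the matrix coefficients of the irreducible representations. First I would record the orthogonality input: by the orthogonality relation $\int_{G}u_{ij}^{L^{\sigma}}\overline{u}_{lm}^{L^{\tau}}\,d\lambda = \delta_{\sigma\tau}\,\delta_{il}\delta_{jm}/d_{\sigma}$ established just above (for the normalized basis $(\xi_i)$), the rescaled functions $e_{ij}^{\sigma} := \sqrt{d_{\sigma}}\,u_{ij}^{L^{\sigma}}$ form an orthonormal family in $L_{2}(G,\lambda)$, each lying in $\mathcal{K}(G)\subset L_{2}(G,\lambda)$ by Proposition \ref{L2}. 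The whole content then reduces to showing this family is \emph{total}, i.e.\ that its closed linear span is all of $L_{2}(G,\lambda)$. Granting totality, every $f$ expands as $f=\sum_{\sigma,i,j}\langle f,e_{ij}^{\sigma}\rangle e_{ij}^{\sigma}=\sum_{\sigma}d_{\sigma}\sum_{i,j}a_{ij}^{\sigma}u_{ij}^{L^{\sigma}}$ with $a_{ij}^{\sigma}:=\int_{G}f\,\overline{u}_{ij}^{L^{\sigma}}\,d\lambda$, only countably many coefficients surviving by Bessel's inequality; this is exactly the asserted formula.

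For totality I would first identify the linear span of the $u_{ij}^{L^{\sigma}}$ with the space $\mathcal{L}(G)$ of Lemma \ref{stone2}. Expanding $u=\sum_{j}\beta_{j}\theta_{j}$ and $v=\sum_{i}\gamma_{i}\theta_{i}$ in the orthonormal basis shows that every generator $t\mapsto\langle U_{t}^{L^{\sigma}}u,v\rangle$ of $\mathcal{L}_{\sigma}(G)$ equals $\sum_{i,j}\beta_{j}\overline{\gamma}_{i}\,u_{ij}^{L^{\sigma}}$, so that $\mathrm{span}\{u_{ij}^{L^{\sigma}}\}=\mathcal{L}(G)$. Lemma \ref{stone2}, which is itself an application of the Stone--Weierstrass Theorem \ref{stone}, then gives that this span is dense in $\mathfrak{C}_{0}(G)$ for the uniform norm. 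It remains only to upgrade uniform density on $\mathfrak{C}_{0}(G)$ to density in the $L_{2}$-norm, after which an orthonormal family with dense span is automatically a complete orthonormal system and the claimed expansion follows.

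The step I expect to be the main obstacle is precisely this passage from uniform to $L_{2}$ density. When $\lambda(G)<\infty$ (in particular if $G$ is compact) it is immediate, since $\|h\|_{2}\le\lambda(G)^{1/2}\|h\|_{\infty}$ turns uniform approximation into $L_{2}$ approximation; for noncompact $G$ with $\lambda(G)=\infty$ this implication fails, and the argument must genuinely exploit that every $u_{ij}^{L^{\sigma}}$, hence every element of $\mathcal{L}(G)$, has compact support by Proposition \ref{L2}. Concretely, I would approximate a given $f\in L_{2}(G,\lambda)$ first by some $\phi\in\mathcal{K}(G)$ (density of $\mathcal{K}(G)$ in $L_{2}$), fix a compact set containing $\mathrm{supp}\,\phi$ in its interior, and then arrange that the uniform approximants of $\phi$ furnished by Lemma \ref{stone2} are localized to that fixed compact set, so that their uniform convergence forces $L_{2}$ convergence. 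The delicate point is that $\mathcal{L}(G)$ is not stable under multiplication by a cut-off function, so one cannot simply truncate an approximant; the compact-support structure of the $u_{ij}^{L^{\sigma}}$, rather than the bare statement of Lemma \ref{stone2}, is what must carry this localization.

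Finally, to pass from the scalar result to $f\in L_{2}(G,\mathcal{A})$ I would argue through the Banach-algebra values: the coefficients $a_{ij}^{\sigma}=\int_{G}f(t)\,\overline{u}_{ij}^{L^{\sigma}}(t)\,d\lambda(t)$ are well-defined elements of $\mathcal{A}$ by Bochner integration, and applying the scalar totality to the scalar functions obtained by composing $f$ with continuous linear functionals on $\mathcal{A}$ yields the series $f=\sum_{\sigma}d_{\sigma}\sum_{i,j}a_{ij}^{\sigma}u_{ij}^{L^{\sigma}}$ with convergence in the $N_{2}$-seminorm. This reduction is routine once the scalar statement is in place, so the crux of the whole proof remains the $L_{2}$-completeness of the system $\{u_{ij}^{L^{\sigma}}\}$.
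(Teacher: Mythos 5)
Your route is genuinely different from the paper's: you attempt the classical Peter--Weyl argument (orthonormality of $\sqrt{d_{\sigma}}\,u_{ij}^{L^{\sigma}}$ in $L_{2}(G,\lambda)$ plus totality), whereas the paper never touches completeness of an orthonormal system. The paper instead expands the \emph{transform}: by Proposition \ref{dec} every $\hat g$ with $g\in L_{1}$ has the form $\sum_{\sigma}d_{\sigma}\sum_{i,j}a_{ij}^{\sigma}\hat u_{ij}^{L^{\sigma}}$, it approximates $f\in L_{2}$ by $f_{n}\in L_{1}\cap L_{2}$, passes to the limit in the coefficients $a_{ij}^{\sigma}=\hat f_{n}(\sigma)(\theta_{j},\theta_{i})$, and then removes the hats using the injectivity of $f\mapsto\hat f$ (Corollary \ref{inj2}). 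On your route, the entire content of the theorem becomes the totality of $\{u_{ij}^{L^{\sigma}}\}$ in $L_{2}(G,\lambda)$, and that is exactly what you do not prove. Since $G$ is noncompact, $\lambda(G)=\infty$, so the uniform density of $\mathcal{L}(G)$ in $\mathfrak{C}_{0}(G)$ (Lemma \ref{stone2}) gives no $L_{2}$ information; your proposed remedy --- ``arrange that the uniform approximants are localized to that fixed compact set'' --- is itself an unproved claim, and nothing in Lemma \ref{stone2} or Proposition \ref{L2} supplies it: Stone--Weierstrass (Theorem \ref{stone}) gives no control whatsoever on the supports of the approximants, and, as you note yourself, $\mathcal{L}(G)$ is not stable under cut-offs, so truncation is unavailable. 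Flagging the obstacle does not fill it; as written, the key step is asserted, not proved. (A smaller slip in the same part: the generators $t\mapsto\langle U_{t}^{L^{\sigma}}u,v\rangle$ are \emph{infinite} sums $\sum_{i,j}\beta_{j}\overline{\gamma}_{i}u_{ij}^{L^{\sigma}}$, so $\mathrm{span}\{u_{ij}^{L^{\sigma}}\}$ is contained in, not equal to, $\mathcal{L}(G)$; this is harmless for uniform density but matters if you intend the approximants to inherit the compact supports of the $u_{ij}^{L^{\sigma}}$.)

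The final reduction to $\mathcal{A}$-valued $f$ is also not ``routine.'' For a general Banach algebra $\mathcal{A}$, $L_{2}(G,\mathcal{A})$ is not a Hilbert space, and an expansion $f=\sum_{\sigma}d_{\sigma}\sum_{i,j}a_{ij}^{\sigma}u_{ij}^{L^{\sigma}}$ with Bochner-integral coefficients need not converge in the $N_{2}$-seminorm: norm convergence of orthogonal expansions of vector-valued functions is a genuine geometric restriction on the target space (it already fails for partial Fourier sums of vector-valued $L_{2}$ functions on the circle for suitable Banach spaces). Composing with continuous linear functionals, as you propose, can only give $\Lambda(f)=\Lambda(S)$ for each $\Lambda$, a weak identity --- and even that presupposes the series $S$ converges in some topology, which is precisely what is in question. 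So your proposal has two genuine holes: the scalar totality (the crux you correctly identified but did not close) and the vector-valued upgrade; the paper avoids both issues by deriving the expansion of $\hat f$ coefficientwise from Proposition \ref{dec} and inverting via injectivity.
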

\begin{proof}
With Proposition \ref{dec} for $g \in L_{1}(G, \lambda, \mathcal{A}),$ there exists $(a_{ij}^{\sigma})_{1 \leq i, j \leq \infty}$,  $a_{ij}^{\sigma}\in \mathcal{A}$ such that 
\[\hat{g} (\sigma) = \sum _{i,j = 1}^{\infty}d_{\sigma}  a_{ij}^{\sigma}\hat{u}_{ij}^{L^{\sigma}} \] 
with $\hat{u}_{ij}^{L^{\sigma}}(\sigma)(u,v)  = \displaystyle \int_{G}\left< \overline{U}_{t}^{L^{\sigma}}u,v \right>_{H^{L^{\sigma}}}u_{ij}^{L^{\sigma}}(t)d\lambda(t)$ and 
$\hat{g} (\sigma) (\theta_{j}, \theta_{i})$ = $ a_{ij}^{\sigma}.$\\ Thus $\hat{g}  = \displaystyle \sum _{\sigma\in \Sigma}d_{\sigma} \sum _{i,j = 1}^{\infty} a_{ij}^{\sigma}\hat{u}_{ij}^{L^{\sigma}}$.
\\

It is well known that $L_{1}(G, \lambda, \mathcal{A}) \cap L_{2}(G, \lambda, \mathcal{A})$ is dense in $L_{2}(G, \lambda, \mathcal{A})$.
Then, for every $f \in L_{2}(G, \lambda, \mathcal{A})$ there exists a sequence $(f_{n})$ in $L_{1}(G, \lambda, \mathcal{A}) \cap L_{2}(G, \lambda, \mathcal{A})$
such \\that  $f_{n}\xrightarrow[n\to\infty]{}  f $ in $L_{2}(G, \lambda, \mathcal{A})$.
$\hat{f_{n}} = \displaystyle \sum _{\sigma\in \Sigma}d_{\sigma} \displaystyle \sum _{i,j = 1}^{\infty}\mbox{  }^{n} a_{ij}^{\sigma}\hat{u}_{ij}^{L^{\sigma}} $ where
$^{n} a_{ij}^{\sigma} = \hat{f_{n}} (\sigma) (\theta_{j}, \theta_{i}).$ We have \\ $\hat{f_{n}}(\sigma) (\theta_{j}, \theta_{i})\xrightarrow[n\to\infty]{}  \hat{f}(\sigma) (\theta_{j}, \theta_{i}) $ ie there  exists $a_{ij}^{\sigma}$ in $\mathcal{A}$ such that  $^{n} a_{ij}^{\sigma} \xrightarrow[n\to\infty]{}  a_{ij}^{\sigma}.$\\ Finally  $\hat{f}  = \displaystyle \sum _{\sigma\in \Sigma}d_{\sigma} \sum _{i,j = 1}^{\infty} a_{ij}^{\sigma}\hat{u}_{ij}^{L^{\sigma}}.$

According to the corollary \ref{inj2} the mapping $f \longmapsto \hat{f}$ is injective; then 
$\hat{f}  = \displaystyle \sum _{\sigma\in \Sigma}d_{\sigma} \sum _{i,j = 1}^{\infty} a_{ij}^{\sigma}\hat{u}_{ij}^{L^{\sigma}}$ i.e. $\hat{f}  = \displaystyle \widehat{\left(\displaystyle \sum _{\sigma\in \Sigma}d_{\sigma} \sum _{i,j = 1}^{\infty} a_{ij}^{\sigma}u_{ij}^{L^{\sigma}}\right)}$   $\Longrightarrow  f   = \displaystyle \sum _{\sigma\in \Sigma}d_{\sigma} \sum _{i,j = 1}^{\infty} a_{ij}^{\sigma}u_{ij}^{L^{\sigma}}.$
\end{proof}
\begin{corollary}
For every $f \in L_{2}(G,\mathcal{A})$, we have 
\begin{eqnarray}
f  & = & \displaystyle\sum_{\sigma \in \Sigma}d_{\sigma}\displaystyle\sum_{i,j = 1}^{\infty}\hat{f}(\theta_{j},\theta_{i}) u_{ij}^{L^{\sigma}}. 
\end{eqnarray}
\end{corollary}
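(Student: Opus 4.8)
The plan is to extract the corollary directly from the Modified Peter-Weyl theorem by identifying the abstract coefficients $a_{ij}^{\sigma}$ appearing there with the Fourier coefficients $\hat{f}(\sigma)(\theta_{j},\theta_{i})$. The theorem already furnishes the expansion $f = \sum_{\sigma\in\Sigma} d_{\sigma}\sum_{i,j=1}^{\infty} a_{ij}^{\sigma} u_{ij}^{L^{\sigma}}$ for suitable $a_{ij}^{\sigma}\in\mathcal{A}$, so the corollary reduces to pinning down these coefficients and substituting.

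First I would obtain the identification on the Fourier side. Applying Proposition \ref{dec} to $\Phi = \hat{f}\in\mathcal{S}(\Sigma,\mathcal{A})$ gives $\hat{f}(\sigma) = \sum_{i,j=1}^{\infty} d_{\sigma}\, a_{ij}^{\sigma}\, \hat{u}_{ij}^{L^{\sigma}}$ with $a_{ij}^{\sigma} = \hat{f}(\sigma)(\theta_{j},\theta_{i})$, where the last equality is precisely the coefficient-reading carried out in the proof of Proposition \ref{dec}: the normalization $\hat{u}_{kl}^{L^{\sigma}}(\sigma)(\theta_{j},\theta_{i}) = \delta_{jl}\delta_{ik}/d_{\sigma}$, which is a restatement of the Schur orthogonality relation in the chosen basis, collapses the double sum to a single surviving term when one evaluates at $(\theta_{j},\theta_{i})$.

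Second I would match this with the spatial expansion. Following the approximation used in the theorem's proof, pick $f_{n}\in L_{1}(G,\lambda,\mathcal{A})\cap L_{2}(G,\lambda,\mathcal{A})$ with $f_{n}\to f$; the theorem records that the coefficients satisfy ${}^{n}a_{ij}^{\sigma} = \hat{f_{n}}(\sigma)(\theta_{j},\theta_{i})$ and ${}^{n}a_{ij}^{\sigma}\to a_{ij}^{\sigma}$. By the continuity of $f\mapsto\hat{f}$ from Corollary \ref{inj2}, the evaluations converge as well, $\hat{f_{n}}(\sigma)(\theta_{j},\theta_{i})\to\hat{f}(\sigma)(\theta_{j},\theta_{i})$; comparing the two limits forces $a_{ij}^{\sigma} = \hat{f}(\sigma)(\theta_{j},\theta_{i})$. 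Substituting this value back into the Peter-Weyl expansion yields $f = \sum_{\sigma} d_{\sigma}\sum_{i,j}\hat{f}(\sigma)(\theta_{j},\theta_{i})\, u_{ij}^{L^{\sigma}}$, which is the asserted formula once $\hat{f}(\theta_{j},\theta_{i})$ is read as shorthand for $\hat{f}(\sigma)(\theta_{j},\theta_{i})$.

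The step I expect to require the most care is the interchange of the limit $n\to\infty$ with the evaluation functional $\Phi\mapsto\Phi(\sigma)(\theta_{j},\theta_{i})$: one must verify that this functional is continuous for the topology of $\mathcal{S}_{\infty}(\Sigma,\mathcal{A})$, which follows from the estimate $\Vert\hat{f}(\sigma)\Vert\leq\Vert f\Vert_{1}$ in Corollary \ref{inj2}. One should also confirm that the cross terms with $\tau\neq\sigma$ genuinely drop out when evaluating $\hat{u}_{ij}^{L^{\sigma}}(\tau)$, which is exactly where the orthogonality relation $\int_{G} u_{ij}^{L^{\sigma}}\overline{u}_{lm}^{L^{\tau}}\,d\lambda = 0$ for $\sigma\neq\tau$ enters. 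Once these convergence and orthogonality points are secured, the coefficient identification, and hence the corollary, is immediate.
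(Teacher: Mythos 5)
Your proposal follows the route the paper itself intends: the corollary carries no separate proof in the paper precisely because the coefficients $a_{ij}^{\sigma}$ produced in the proof of the Modified Peter--Weyl theorem are already identified there, via Proposition \ref{dec}, as $a_{ij}^{\sigma}=\hat{f}(\sigma)(\theta_{j},\theta_{i})$ (through the limit ${}^{n}a_{ij}^{\sigma}=\hat{f_{n}}(\sigma)(\theta_{j},\theta_{i})\to a_{ij}^{\sigma}$), so the stated formula is just substitution. Your reconstruction of that identification --- the coefficient-reading $\hat{u}_{kl}^{L^{\sigma}}(\sigma)(\theta_{j},\theta_{i})=\delta_{jl}\delta_{ik}/d_{\sigma}$ from the normalized Schur relations, and the vanishing of the cross terms for $\tau\neq\sigma$ --- matches the paper's computations.

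There is, however, one step where your justification as written would fail. You argue that $\hat{f_{n}}(\sigma)(\theta_{j},\theta_{i})\to\hat{f}(\sigma)(\theta_{j},\theta_{i})$ by combining the continuity estimate $\Vert\hat{f}(\sigma)\Vert\leq\Vert f\Vert_{1}$ of Corollary \ref{inj2} with continuity of the evaluation functional on $\mathcal{S}_{\infty}(\Sigma,\mathcal{A})$. But the approximating sequence converges to $f$ only in $L_{2}(G,\lambda,\mathcal{A})$, and since $G$ is not compact, $L_{2}$-convergence gives no control on $\Vert f_{n}-f\Vert_{1}$ (the paper's own remark notes there are no inclusions among the $L_{p}$ spaces when $\lambda(G)=\infty$); worse, $f$ itself need not belong to $L_{1}$, so the transform in the sense of Corollary \ref{inj2} is not even defined for it. The repair is to bypass $\mathcal{S}_{\infty}$ entirely: by Proposition \ref{L2} the function $t\mapsto\left<\overline{U}_{t}^{L^{\sigma}}\theta_{j},\theta_{i}\right>$ is continuous with compact support, hence lies in $L_{2}$, so the coefficient functional $f\mapsto\int_{G}\left<\overline{U}_{t}^{L^{\sigma}}\theta_{j},\theta_{i}\right>f(t)\,d\lambda(t)$ is $L_{2}$-continuous by H\"older's inequality; this simultaneously defines $\hat{f}(\sigma)(\theta_{j},\theta_{i})$ for $f\in L_{2}$ and yields the needed convergence. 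To be fair, this gap is inherited rather than introduced: the paper's own proof of the theorem asserts the same convergence with no justification at all, so your attempt at least isolates the point that needs an argument, even if the tool you reached for is the wrong one.
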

\section{Concluding remarks}
 In this work, we have developed a formalism for the Fourier-Stieljes transform on a locally compact group. More specifically, we  have:
\begin{itemize}
\item constructed the Fourier-Stieljes transform  of a bounded vector measure on a locally compact group into a Banach algebra,  
\item derived the Fourier transform of an integrable function, and
 \item  discussed corresponding relevant  properties including linearilty, injectivity, and continuity.  
\end{itemize}
Also, using the induced representation and its consequences, 
we established the associated Shur's orthogonality relation on a locally compact group. 

\medskip
Received xxxx 20xx; revised xxxx 20xx.
\medskip


\begin{thebibliography}{99}
\bibitem{Di}
     \newblock N.  Dinculeanu,
     \newblock \emph{Integration on locally compact groups},
     \newblock Noordhoff International Publishing, 1974.
\bibitem{Bar} 
     \newblock N.  Bourbaki,
     \newblock \emph{Integration,  Chap. 6},
     \newblock Hermann, Paris, 1959.

\bibitem{assiamoua1}
     \newblock  V.S.K Assiamoua, 
     \newblock $L_{1}(G, A)$-multipliers,
     \newblock \emph{Acta. Sci. Math}, \textbf{53} (1989), 309--318.

\bibitem{Ja} 
     \newblock J.A  Clarkson,
     \newblock \emph{Abstrakte funktionen und lineare lperatoren},
     \newblock Trans. Amer. Math. Soc,\textbf{ 40}, 396-414, 1936.

\bibitem{Im}
     \newblock I.M.  Gel'fand,
     \newblock \emph{Uniformly convex spaces},
     \newblock Mat. Sb. (N. S.),\textbf{ 46}, 235-286 1938. 
     
\bibitem{L} 
     \newblock A.A. Lyapunov,
     \newblock \emph{On completely additive set functions},
     \newblock Akad. Nauk SSSR Ser. Mat., \textbf{4}, 465-478 1940.         
     
\bibitem{Rg} 
     \newblock G.G. Bartle,
     \newblock \emph{A general bilinear vector integral},
     \newblock Studia Math., \textbf{15}, 337-352, 1956.              

\bibitem{H-R} 
     \newblock N. Dunford, and  J.T. Schwartz,
     \newblock \emph{Linear operators},
     \newblock Interscience  New York, \textbf{2}, 1957.              

\bibitem{Jl}
     \newblock J. Lindenstrauss and A. Pelczy\'nski,
     \newblock \emph{Absolutely summing operators in Lp-spaces and their applications},
     \newblock Studia Math., \textbf{29}, 275-326 1968.              

\bibitem{D-U} 
     \newblock J. Diestel and J.J. Uhl Jr,
     \newblock \emph{Vector measure},
     \newblock Amer. math-Soc.,\textbf{15}, 1977.  
     
\bibitem{S}  
    \newblock Kutateladze, S. S.: 
    \newblock \emph{Institute of Mathematics},
    \newblock pr. Akad. Koptyuga 4, Novo sibirsk, 630090 Russia

\bibitem{Il}
     \newblock I. Kluv\'anek,
     \newblock \emph{Applications of vector measures},
     \newblock Proceedings of the
Conference on Integration, Topology, and Geometry in Linear Spaces,
Univ. North Carolina, Chapel Hill N.C., 101-134, 1980.  

\bibitem{Af}
     \newblock A. Fern\'andez and M. Faranjo,
     \newblock \emph{Rybakov's theorem for vector measures in Fr\'echet spaces, measure theory and its applications},
     \newblock Lecture Notes in
Mathematics  \textbf{1033}, 144-192, 1984.  

\bibitem{Gp}
     \newblock G.P. Curbera and W.J. Ricker,
     \newblock \emph{Vector measures, integration and
applications, positivity},
     \newblock Birkhauser Basel, 127-160, 2007. 
     
\bibitem{folland}
     \newblock G.B. Folland,
     \newblock \emph{A course in abstract harmonic analysis},
     \newblock CRC Press.Inc, Washington, 1995. 
     
\bibitem{gaal}
     \newblock S.A. Gaal,
     \newblock \emph{Linear analysis and representation theory},
     \newblock Springer-Verlag Berlin Heidelberg New York, 1973.

\bibitem{anahar} 
     \newblock K. Kinvi and S. Touré,
     \newblock \emph{Analyse harmonique abstraite},
     \newblock Abidjan, 2013.

\bibitem{Bar2} 
     \newblock N. Bourbaki,
     \newblock \emph{ Integration},
     \newblock Chap. 7-8, Hermann, Paris, 1963.

\bibitem{B-R} (MR1301779) [10.1007/978-1-4612-0873-0]
     \newblock A.D  Barut and R. Raczka,
     \newblock \emph{ Theory of group representations and applications},
     \newblock  Polish Scientific publishers, USA, 1980.

\bibitem{men}
    \newblock Y. Mensah,
    \newblock  \emph{Espaces $\mathcal{S}p (\Sigma; A)$  des transform\'ees de Fourier-Stieltjes des mesures vectorielles d\'efinies sur un groupe compact},
    \newblock  Ph.D thesis, Université de Lomé, 2010.

\end{thebibliography}
\end{document}